%
%
%
%
%
\documentclass{amsart}
\usepackage{amsmath}
\usepackage{latexsym,amsxtra,amscd,ifthen}
\usepackage{amsfonts}
\usepackage{color,graphicx} 
\usepackage{verbatim}
\usepackage{amsmath}
\usepackage{amsthm}
\usepackage{amssymb}
\usepackage{url}

\theoremstyle{plain}




\def\MSA/{\mbox{\textsc{ms~access}}}
\newtheorem{theorem}{Theorem}
\newtheorem{lemma}[theorem]{Lemma}
\newtheorem{proposition}[theorem]{Proposition}
\newtheorem{corollary}[theorem]{Corollary}
\newtheorem{conjecture}[theorem]{Conjecture}

\numberwithin{theorem}{section}
\numberwithin{equation}{theorem}

\theoremstyle{definition}
\newtheorem{definition}[theorem]{Definition}

\newtheorem{remark}[theorem]{Remark}
\newtheorem{question}[theorem]{Question}
\newtheorem*{question*}{Question}

\DeclareMathOperator{\LND}{LND}
\DeclareMathOperator{\ML}{ML}
\DeclareMathOperator{\Der}{Der}
\DeclareMathOperator{\Ker}{{\rm ker}}

\begin{document}

\title[Cancellation in noncommutative algebras]{Noncommutative analogues of a cancellation theorem of Abhyankar, Eakin, and Heinzer}

\author{Jason Bell}\thanks{The first-named author was supported by NSERC grant RGPIN-2016-03632.}

\address{Bell: Department of Pure Mathematics, University of Waterloo, Canada}

\email{jpbell@uwaterloo.ca}

\author{Maryam Hamidizadeh}

\address{Hamidizadeh: Faculty of Mathematical Sciences, Shahrood University of Technology, Iran}

\email{maryamhamidizadeh@yahoo.com}

\author{Hongdi Huang}

\address{Huang: Department of Pure Mathematics, University of Waterloo, Canada}

\email{h237huan@uwaterloo.ca}

\author{Helbert Venegas}

\address{Venegas: Departamento de Matem{\'a}ticas, Universidad Nacional de Colombia, Sede Bogot{\'a}, Colombia}

\email{hjvenegasr@unal.edu.co}

\begin{abstract}
Let $k$ be a field and let $A$ be a finitely generated $k$-algebra.  The algebra $A$ is said to be cancellative if whenever $B$ is another $k$-algebra with the property that $A[x]\cong B[x]$ then we necessarily have $A\cong B$.  An important result of Abhyankar, Eakin, and Heinzer shows that if $A$ is a finitely generated commutative integral domain of Krull dimension one then it is cancellative.  We consider the question of cancellation for finitely generated not-necessarily-commutative domains of Gelfand-Kirillov dimension one, and show that such algebras are necessarily cancellative when the characteristic of the base field is zero.  In particular, this recovers the cancellation result of Abhyankar, Eakin, and Heinzer in characteristic zero when one restricts to the commutative case. We also provide examples that show affine domains of Gelfand-Kirillov dimension one need not be cancellative when the base field has positive characteristic, giving a counterexample to a conjecture of Tang, the fourth-named author, and Zhang. In addition, we prove a skew analogue of the result of Abhyankar-Eakin-Heinzer, in which one works with skew polynomial extensions as opposed to ordinary polynomial rings.
\end{abstract}

\subjclass[2010]{Primary 16P99, 16W99}


\keywords{Zariski cancellation problem, derivations, Makar-Limanov invariant, Gelfand-Kirillov dimension, skew polynomial extensions}


\maketitle

\section{Introduction}
A longstanding problem in affine algebraic geometry is the Zariski cancellation problem, which asks whether an affine variety $X$ over an algebraically closed field $k$ having the property that $X\times \mathbb{A}^1\cong \mathbb{A}^{n+1}$ is necessarily isomorphic to $\mathbb{A}^n$. The question is known to have an affirmative answer when $n=1$ \cite{AEH}, and $n=2$, with the characteristic zero case being done by Fujita \cite{Fuj} and  Miyanishi-Sugie \cite{MS}, and the positive characteristic case handled by Russell \cite{Rus}. In positive characteristic, Gupta \cite{Gu1, Gu2} gave counterexamples to the Zariski cancellation problem in dimension at least three, but the problem remains open in dimension greater than two in the case that the base field has characteristic zero.  

One can ask more generally to determine which affine varieties $X$ are cancellative in the sense that when $X\times \mathbb{A}^1\cong Y\times \mathbb{A}^1$ for some affine variety $Y$ we must have $X\cong Y$.  In this general setting, there is more pathological behaviour and Danielewski \cite{Dan} and Hochster \cite{Hoc} gave examples of affine complex varieties that are not cancellative.  It should be noted, however, that these non-cancellative examples all have dimension at least two, and if one restricts one's attention to curves, cancellation holds: this is a result of Abhyankar, Eakin, and Heinzer \cite{AEH}; in fact, they prove more: they show that if $d\ge 1$ and $R$ and $S$ are the coordinate rings of affine curves, then if $R[x_1,\ldots ,x_d]\cong S[x_1,\ldots ,x_d]$ then we have $R\cong S$.  

In recent years, increased attention has been paid to the noncommutative analogue of the Zariski cancellation problem \cite{BZ1, GW, LWZ, LuWZ, TVZ}.  In this setting one has a field $k$ and a finitely 
generated $k$-algebra $R$ and one asks whether a $k$-algebra isomorphism $R[x]\cong S[x]$ implies $R\cong S$ when $S$ is another finitely generated $k$-algebra.  Algebras $R$ that have this property 
are said to be \emph{cancellative}.  An algebra $R$ is \emph{strongly cancellative} if, for every $d\ge 1$, an isomorphism $R[x_1,\ldots ,x_d]\cong S[x_1,\ldots ,x_d]$ implies that $R$ is isomorphic to $S$. It is known that many classes of noncommutative algebras are cancellative or strongly cancellative in the sense above.  Notably, cancellation holds for algebras with trivial centre, 
for ``non-commutative surfaces'' that are not commutative, and many quantizations of coordinate rings of affine varieties (see the results in \cite{BZ1}).  

The goal of this paper is to look at non-commutative 
analogues of  the result of Abhyankar, Eakin, and Heinzer.  Their theorem, when one works in the category of commutative algebras, says that if $A$ is a finitely generated algebra that is an integral domain of 
Krull dimension one, then $A$ is strongly cancellative in the above sense.  We consider a noncommutative analogue of this theorem, in which one considers finitely generated domains of Gelfand-Kirillov dimension one.  
When working with noncommutative algebras, it is generally preferable to work with Gelfand-Kirillov dimension rather than with the classical Krull dimension, and
Gelfand-Kirillov dimension and Krull dimension coincide when one restricts one's focus to the class of finitely generated commutative algebras over a field.  For more information about Gelfand-Kirillov dimension, we refer the reader to the book of Krause and 
Lenagan \cite{KL}. Throughout this paper, when $A$ is a finitely generated $k$-algebra, we will simply say that $A$ is an \emph{affine algebra over} $k$, or simply an \emph{affine algebra} when the base field is understood; we shall also let $Z(A)$ denote the centre of an algebra $A$.  Our main result is the following theorem.
\begin{theorem}
\label{thm: main}
We have the following results for affine domains of Gelfand-Kirillov dimension one. 
\begin{enumerate}
\item Let $k$ be a field of characteristic zero and let $A$ be an affine domain over $k$ of Gelfand-Kirillov dimension one. Then $A$ is cancellative.
\item Let $p$ be prime.  Then there exists a field $k$ of characteristic $p$ and an affine domain $A$ of Gelfand-Kirillov dimension one that is not cancellative.
\end{enumerate}
\end{theorem}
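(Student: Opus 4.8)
The plan is to treat the two parts by quite different methods, relying in both on the structural fact---due to Small, Stafford, and Warfield---that an affine domain of Gelfand--Kirillov dimension one over a field is noetherian, satisfies a polynomial identity, and is module-finite over its centre $Z(A)$, which is then an affine commutative domain of Krull dimension one. I will use freely that $\Gkdimension(A[x]) = \Gkdimension(A)+1$, so that the hypothesis $A[x]\cong B[y]$ forces $B$ to be an affine domain of Gelfand--Kirillov dimension one as well, and that $Z(A[x]) = Z(A)[x]$.

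For part (a), suppose $\phi\colon A[x]\xrightarrow{\sim} B[y]$ is a $k$-algebra isomorphism. The first step is to transport the partial-derivative derivation: the map $\partial_y$ is a locally nilpotent derivation of $B[y]$ with kernel $B$ and slice $y$, so $D:=\phi^{-1}\partial_y\phi$ is a locally nilpotent derivation of $A[x]$ with central slice $s:=\phi^{-1}(y)$ and kernel $R:=\Ker D\cong B$; transporting the polynomial structure gives $A[x]=R[s]$ as a polynomial ring over $R$, with $s\in Z(A[x])=Z(A)[x]$. Taking centres yields $Z(A)[x]=Z(A[x])=Z(R)[s]$, an identity of affine commutative domains of Krull dimension two, each presented as a polynomial ring over a curve. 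Applying commutative cancellation to these centres---this is exactly the point at which the argument reproduces, and in the commutative case recovers, the theorem of Abhyankar--Eakin--Heinzer in characteristic zero---gives $Z(A)\cong Z(R)$. Next I would match the quotient division rings: from $A[x]=R[s]$ one gets $\Frac(A)(x)\cong\Frac(R)(s)$ as central division algebras over the isomorphic function fields $\Frac(Z(A))(x)\cong\Frac(Z(R))(s)$, and since $\mathrm{Br}(F)\to\mathrm{Br}(F(t))$ is injective, the classes of $\Frac(A)$ and $\Frac(R)$ agree, so $\Frac(A)\cong\Frac(R)$. At this stage $A$ and $R$ are two orders of Gelfand--Kirillov dimension one in one and the same division algebra over the same curve, and the remaining task is to upgrade this to a genuine isomorphism $A\cong R$.

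The main obstacle is precisely this last descent. Writing $T:=A[x]$ as a sheaf of orders over the surface $\Spec Z(A)[x]$, one has $A=T/(x)$ and $R=T/(s)$, the restrictions of $T$ to the two ``coordinate'' curves $\{x=0\}$ and $\{s=0\}$; since $T$ is by construction pulled back from $A$ along the projection forgetting $x$, the content is to show that its restriction to the \emph{a priori} different curve $\{s=0\}$ is again isomorphic to $A$. I expect this to require the characteristic-zero rigidity of orders of Gelfand--Kirillov dimension one: the derivation $D$ restricts to a locally nilpotent derivation of the centre $Z(A)[x]$, and a careful analysis of such derivations, combined with the classification of orders (ramification and conductor data) in a fixed division algebra over a curve, should force the two restrictions to coincide up to isomorphism. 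The case in which $Z(A)$ is not a polynomial ring is the easy one: then $Z(A)$ admits no nonzero locally nilpotent derivation, every locally nilpotent derivation of $A$ restricts trivially to the centre and, by integrality in characteristic zero, must vanish, so $A$ is rigid and $s$ is forced to agree with $x$ up to an automorphism; the genuinely delicate case is $Z(A)\cong k[t]$, where the abundance of locally nilpotent derivations places one squarely in the setting governed by Abhyankar--Eakin--Heinzer.

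For part (b), note first that any counterexample must be genuinely noncommutative: a commutative affine domain of Krull dimension one is cancellative in every characteristic by Abhyankar--Eakin--Heinzer, and over an algebraically closed field Tsen's theorem forces an affine domain of Gelfand--Kirillov dimension one to be commutative, so the base field must be taken non-closed, and indeed non-perfect. The plan is therefore to fix $p$, choose a suitable non-perfect field $k$ of characteristic $p$, and construct explicitly a noncommutative affine domain $A$ of Gelfand--Kirillov dimension one---an order of degree $p$ over a central affine curve---together with a Frobenius-twisted companion $B$, such that $A[x]\cong B[x]$ while $A\not\cong B$. The isomorphism $A[x]\cong B[x]$ would be produced by an explicit $x$-dependent shearing substitution that absorbs the twist using the extra central variable, a move unavailable inside $A$ itself; that $A$ and $B$ are domains of Gelfand--Kirillov dimension one is routine from module-finiteness over their centres. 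The main obstacle here is the separation $A\not\cong B$: one must exhibit an isomorphism invariant---sensitive to the characteristic-$p$ twist, presumably valued in a quotient such as $k/\wp(k)$ or a group measuring the order's defect---that distinguishes $A$ from $B$ yet is annihilated upon adjoining the central polynomial variable, and checking this delicate interplay is where the real work lies.
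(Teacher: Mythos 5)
Your proposal for part (a) stalls exactly where you say it does, and the missing idea is the one the paper uses to avoid the ``orders over a curve'' descent entirely. Rather than transporting $\partial_y$ to $A[x]$ and comparing $A$ with $R=\Ker D$ as two orders in a common division algebra (a comparison you do not carry out, and which is also delicate because the isomorphism $Z(A)[x]\cong Z(R)[s]$ supplied by Abhyankar--Eakin--Heinzer need not be compatible with the identification of quotient division rings), the paper runs a dichotomy on $A$ itself: either $\ML(A)=A$, in which case $A$ is retractable and hence cancellative (Proposition \ref{M3}); or $A$ admits a nonzero locally nilpotent derivation $\delta$, and then the key point --- Remark \ref{rem:direct}(d) --- is that $\Ker\delta$ has Gelfand--Kirillov dimension strictly smaller than $1$, hence is a \emph{finite-dimensional division ring} $A_0$. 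The noncommutative slice theorem (Lemma \ref{gsliceth}) then gives $A\cong A_0[x]$ outright, and the ``genuinely delicate case'' you flag ($Z(A)\cong k'[t]$, abundant locally nilpotent derivations) collapses to the strong cancellativity of finite-dimensional division algebras, quoted from Lezama--Wang--Zhang. Your treatment of the rigid case ($Z(A)$ not a polynomial ring) is essentially sound and agrees with the paper, but without the observation that the kernel of a nonzero LND is finite-dimensional you have no route through the non-rigid case, and the classification-of-orders program you sketch is not a proof.

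For part (b) you have correctly identified the shape of the example (non-perfect base field, a degree-$p$ order over its centre, a Frobenius-twisted companion, an explicit shearing substitution using the extra central variable), but you have not produced one, and the two hard verifications --- that the substitution really is a surjective homomorphism $A[t]\to B[t']$, and that $A\not\cong B$ --- are precisely where all the content lies. The paper takes $K=\mathbb{F}_p(x_1,\ldots,x_{p^2-1})$, $k=\mathbb{F}_p(x_1^p,\ldots,x_{p^2-1}^p)$, the cyclic derivation $\delta(x_i)=x_{i+1}$ (indices mod $p^2-1$), and sets $A=K[x;\delta]$, $B=K[x';\delta^p]$; the isomorphism $A[t]\cong B[t']$ is the substitution $x\mapsto (x')^p+t'$, $t\mapsto (x')^{p^2}-x'+(t')^p$, and the separation $A\not\cong B$ uses no exotic invariant: the unit groups force any isomorphism to fix $K$ pointwise (since every element of $K$ is the unique $p$-th root of an element of $k$), reduce $\Psi(x)$ to the form $\alpha x'+\beta$, and then comparing $\delta$ with $\alpha\delta^p$ on two of the generators $x_i$ yields the contradiction $x_2x_{p+2}=x_3x_{p+1}$. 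Your instinct that the obstruction should live in something like $k/\wp(k)$ or a ramification invariant overcomplicates matters; the elementary unit-group argument suffices. As written, neither part of your proposal constitutes a complete proof.
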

Part (b) of Theorem \ref{thm: main} gives a counterexample to a conjecture of \cite[Conjecture 0.3(1)]{TVZ}, while Theorem \ref{thm: main} (a) answers a question of Lezama, Wang, and Zhang \cite[Question 0.5]{LWZ} in the case when the base field has characteristic zero in the domain case.  Since Krull dimension and Gelfand-Kirillov dimension coincide for finitely generated commutative $k$-algebras, Theorem \ref{thm: main} specializes to the classical cancellation result of Abhyankar-Eakin-Heinzer in the case of characteristic zero base fields when one takes $R$ to be commutative.  We note that \cite{AEH} show in fact such rings are strongly cancellative and we do not know whether this conclusion holds in characteristic zero for Theorem \ref{thm: main} (a). We also point out that Lezama, Wang, and Zhang \cite[Theorem 0.6]{LWZ} proved that for algebraically closed base fields $k$, affine prime $k$-algebras of Gelfand-Kirillov dimension one are cancellative.  The algebraically closed property is needed, because the authors invoke Tsen's theorem at one point in their proof. Our example, shows that this application of Tsen's theorem is in some sense necessary to get their result in positive characteristic.  

 In characteristic zero, our Theorem \ref{thm: main} (a) is somewhat orthogonal to the result of \cite{LWZ}, since domains of Gelfand-Kirillov dimension one over algebraically closed fields are commutative by an application of Tsen's theorem to a result of Small and Warfield \cite{SW} and hence the only part of Theorem \ref{thm: main} (a) covered by \cite[Theorem 0.6]{LWZ} is the commutative case, which was previously known from the result of Abhyankar-Eakin-Heinzer \cite{AEH}.

We also prove a result in a different direction; namely, skew cancellativity.  To describe this extension, we recall that given a ring $R$, an automorphism $\sigma$ of $R$ and a $\sigma$-derivation $\delta:R\to R$ of $R$ (that is, $\delta$ satisfies $\delta(rs) = \sigma(r)\delta(s)+\delta(r)s$ for $r,s\in R$), one can define a \emph{skew polynomial} extension $R[x;\sigma,\delta]$, which is just $R[x]$ as an additive abelian group and with multiplication given by $x\cdot r=\sigma(r) x + \delta(r)$ for $r\in R$, where we use the same multiplication rule for elements in $R$ as before.  The two most important special cases of this construction are the skew polynomial extensions of automorphism type, where $\delta=0$; and skew polynomial extensions of derivation type, where $\sigma$ is the identity.  In the former case, where $\delta=0$, it is customary to omit $\delta$ and write $R[x;\sigma]$; and in the latter case, where $\sigma$ is the identity, it is customary to omit $\sigma$ and write $R[x;\delta]$. In light of the Zariski cancellation problem, it is then natural to ask when an algebra $R$ is \emph{skew cancellative}; that is, if $R[x;\sigma,\delta]\cong S[x;\sigma',\delta']$ when do we necessarily have $R\cong S$? We show that this holds in the two cases just mentioned when the coefficient ring $R$ is an affine commutative domain of Krull dimension one.
\begin{theorem}\label{thm: main2}
Let $k$ be a field, let $A$ and $B$ be affine commutative integral domains of Krull dimension one, and let $\sigma,\sigma'$ be $k$-algebra automorphisms of $A$ and $B$ respectively and let $\delta, \delta'$ be $k$-linear derivations of $A$ and $B$ respectively.  If $A[x;\sigma]\cong B[x';\sigma']$ then $A\cong B$. If, in addition, $k$ has characteristic zero and if $A[x;\delta]\cong B[x';\delta']$ then $A\cong B$.
\end{theorem}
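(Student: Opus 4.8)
The common strategy is to recover the coefficient ring as an intrinsically defined subalgebra of the ambient skew polynomial ring $R$, so that the two presentations furnished by the hypothesis are forced to produce isomorphic coefficient rings. Write $R\cong A[x;\sigma]\cong B[x';\sigma']$, respectively $R\cong A[x;\delta]\cong B[x';\delta']$. In the automorphism case the elementary starting point is that $x$ is a normal non-zero-divisor: since $\sigma$ is bijective one has $xR=Rx$, so $xR$ is a completely prime two-sided ideal with $R/xR\cong A$, and $R=\bigoplus_{n\ge 0}Ax^{n}$ is $\mathbb{N}$-graded with $A=R_{0}$. In the derivation case, and this is where characteristic zero enters, the formal derivative $\partial_{x}$ determined by $\partial_{x}|_{A}=0$ and $\partial_{x}(x)=1$ is a well-defined \emph{locally nilpotent} derivation of $R$ with $\Ker\partial_{x}=A$; moreover the $x$-degree filtration is itself intrinsic to $\partial_{x}$, since $F_{n}=\Ker\partial_{x}^{\,n+1}$. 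Thus in each case $A$ is visible inside $R$, and the entire problem is to replace the relevant object---the grading together with the ideal $xR$, or the derivation $\partial_{x}$---by a construction that is invariant under $k$-algebra isomorphism.

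For the derivation case I would run a Makar--Limanov argument, using that $\ML(R)$ is by definition an isomorphism invariant. Recall that a one-dimensional affine domain carries a nonzero locally nilpotent derivation exactly when it is a polynomial ring $k'[t]$ over the subfield $k'$ of its elements algebraic over $k$; so each coefficient ring is either rigid, i.e.\ $\LND(\,\cdot\,)=0$, or isomorphic to some $k'[t]$. The principal case is that $A$ is rigid. Passing to the $x$-degree filtration, whose associated graded ring is the commutative polynomial ring $\gr R\cong A[x]$, and invoking the standard fact that $\ML(A[x])=A$ for a rigid domain $A$, I would carry out a symbol (leading-term) analysis to show that every locally nilpotent derivation of $R$ annihilates $A$; combined with $A=\Ker\partial_{x}\supseteq\ML(R)$ this gives $A=\ML(R)$, and the same reasoning applied to the second presentation gives $B=\ML(R)$, whence $A\cong B$. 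The remaining case, in which a coefficient ring is isomorphic to $k'[t]$, must be handled separately: here the aim is to show directly that every coordinate ring of $R$, that is, every kernel of a locally nilpotent derivation of $R$ admitting a slice $s$ with $\partial(s)=1$, is again a polynomial ring over the intrinsic field $k'$, so that $A\cong k'[t]\cong B$.

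For the automorphism case (arbitrary characteristic) the locally nilpotent derivation is unavailable, and I would instead characterise $A$ as the quotient $R/zR$, where $z$ ranges over the normal elements for which $R/zR$ is a commutative domain of Gelfand--Kirillov dimension one, the plan being to prove that all such quotients are mutually isomorphic: a leading-term analysis in the grading should force any such $z$ to be, up to a unit, of degree one, so that $R/zR\cong A$. To organise this I would first split on the intrinsic invariant $\Gkdimension Z(R)$. A direct computation gives $Z(R)=A^{\sigma}$, a finite extension of $k$ of Gelfand--Kirillov dimension $0$, when $\sigma$ has infinite order, and $Z(R)=A^{\sigma}[x^{n}]$ of Gelfand--Kirillov dimension $2$---with $R$ module-finite over $Z(R)$, hence PI---when $\sigma$ has finite order $n$. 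In the finite-order case the induced isomorphism of centres reads $A^{\sigma}[s]\cong B^{\sigma'}[s']$, so the commutative Abhyankar--Eakin--Heinzer theorem already yields $A^{\sigma}\cong B^{\sigma'}$, after which it remains to lift this isomorphism of the invariant curves to $A\cong B$.

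I expect the main obstacle to be exactly the step that converts the visible but non-canonical subalgebra into an invariant one. In the derivation case this is the symbol argument: a locally nilpotent derivation $E$ of $R$ need not preserve the $x$-degree filtration, so producing from it a well-defined nonzero locally nilpotent derivation of $\gr R\cong A[x]$, and then descending the conclusion $A\subseteq\Ker(\text{symbol of }E)$ back to $A\subseteq\Ker E$, requires the delicate bookkeeping that is standard in Makar--Limanov theory but must be executed carefully here; a related subtlety is showing that the rigid and non-rigid alternatives cannot occur simultaneously for the two presentations, equivalently that when some coordinate ring of $R$ is a polynomial ring then $\ML(R)$ cannot be a proper rigid one-dimensional subalgebra. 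In the automorphism case the analogous difficulties are the canonicity of the quotients $R/zR$ and, in the finite-order PI setting, the reconstruction of the $\sigma$-action needed to promote $A^{\sigma}\cong B^{\sigma'}$ to $A\cong B$.
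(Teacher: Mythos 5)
Your treatment of the derivation case is essentially the paper's: the paper proves (Lemma \ref{lem:delta1}) that $\ML(A[x;\delta])=\ML(A)$ when $\ML(A)=A$, via exactly the leading-term analysis you describe, combines it with the Crachiola--Makar-Limanov fact that a one-dimensional affine domain is either $\LND$-rigid or of the form $k'[t]$, and finishes the non-rigid case by observing that the unit groups of $k'[t][x;\delta]$ and $k''[t][x';\delta']$ are $(k')^*$ and $(k'')^*$, so the isomorphism identifies $k'$ with $k''$. That last step is cleaner than your proposed analysis of ``coordinate rings'' of $R$, and you should also make explicit how the mixed case (one side rigid, the other a polynomial ring) is excluded --- in the paper this falls out because $\ML$ of the rigid side is all of $S$ while $\ML(k'[t][x;\delta])\subseteq k'$, a field, which cannot be a domain of Krull dimension one.

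The automorphism case as you propose it has a genuine gap. Your plan is to characterize $A$ intrinsically as \emph{the} quotient $R/zR$ by a normal element $z$ for which the quotient is a commutative domain of dimension one, claiming all such quotients are mutually isomorphic and that a leading-term argument forces $z$ to have degree one. Neither claim is true: a normal element $z$ of degree zero in $x$, generating a $\sigma$-stable maximal ideal $I$ of $A$ on whose residue field $K=A/I$ the automorphism $\sigma$ acts trivially, yields $R/zR\cong K[x]$, a commutative domain of Krull dimension one that need not be isomorphic to $A$ (take $A$ the coordinate ring of a punctured elliptic curve and $\sigma$ an involution with a fixed point). So the proposed invariant does not single out $A$. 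The paper's Lemma \ref{lem:sigma} instead classifies \emph{all} commutative prime quotients of $A[x;\sigma]$ of Krull dimension one (not just those by principal normal ideals) and shows each is isomorphic either to $A$ or to $K[x]$ with $[K:k]<\infty$, with $A$ always occurring; comparing the two presentations then leaves only the ambiguity that both $A$ and $B$ might themselves be polynomial rings over finite extensions, which is again resolved by the unit-group argument. Your alternative route through $Z(R)=A^{\sigma}$ or $A^{\sigma}[x^n]$ is correctly computed but, as you concede, leaves unresolved the essential step of lifting $A^{\sigma}\cong B^{\sigma'}$ to $A\cong B$, for which no mechanism is offered.
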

A special case of Theorem \ref{thm: main2} was proved by Bergen \cite{Ber} in the derivation case.  Specifically, he proved that if $k$ is a field of characteristic zero and $R[t;\delta]$ is isomorphic to $k[x][y;\delta']$, with $\delta'(x)\in k[x]$ having degree at least one, then $R\cong k[x]$.  It would be interesting to give a ``unification'' of the two results occurring in Theorem \ref{thm: main2} and prove that skew cancellation holds for general skew polynomial extensions, although this appears to be considerably more subtle than the cases we consider.  The positive characteristic case for skew extensions of derivation type appears to have additional subtleties.  In particular, the constructions given in \S \ref{S4} show that cancellation can behave strangely with skew extensions of derivation type in positive characteristic.

The outline of this paper is as follows.  In Section \ref{S2}, we recall the definition of the Makar-Limanov invariant and other concepts related to cancellation.  In addition, we prove a general result that suggests over ``nice'' base fields that cancellation should be controlled by the centre (see Proposition \ref{that}, Corollary \ref{corn}, and Conjecture \ref{conic}).  In Section \ref{S3}, we prove Theorem \ref{thm: main} (a) and prove some positive results for domains of Gelfand-Kirillov dimension one over positive characteristic base fields.  In Section \ref{S4}, we construct the family of examples needed to establish Theorem \ref{thm: main} (b).  Finally, in Section \ref{S5}, we consider skew cancellation and prove Theorem \ref{thm: main2}.  
\section{The Makar-Limanov invariant}\label{S2}
In this section, we provide the basic background on the Makar-Limanov invariant and prove Proposition \ref{that} and Corollary \ref{corn}, which give further underpinning to the idea that the centre of an algebra plays a large role in whether the cancellation property holds for that algebra.  The Makar-Limanov invariant was introduced by Makar-Limanov \cite{Mak}, who called the invariant ${\rm AK}$, although it is now standard to use the terminology Makar-Limanov invariant and the notation $\ML$.

We quickly recall the basic concepts involved in the definition of this invariant. These concepts can be found in \cite{Mak, BZ1, LWZ}.
\begin{definition}\label{Hasse}
Let $k$ be a field and let $A$ be a $k$-algebra. 
\begin{enumerate}
\item We let $\Der(A)$ denote the collection of $k$-linear derivations of $A$.
 \item We let $\LND(A)=\{ \delta \in \Der(A) \mid \delta \text{ is locally nilpotent} \}$.
 \item A \emph{Hasse-Schmidt derivation} of $A$ is a sequence of $k$-linear maps $\partial := \{\partial_i\}_{i\geq 0}$ such that: \[\partial_0 = {\rm id}_A,\,\,\textrm{and}\,\, \partial_n(ab) =\sum^n_{i=0}\partial_i(a)\partial_{n-i}(b)\]
for $a, b \in A$ and $n \geq 0$. 
 \item A Hasse-Schmidt derivation $\partial=(\partial_n)$ is called ${\it locally~nilpotent}$ if
for each $a\in A$ there exists an integer $N=N(a)\geq 0$ such that $\partial_n(a) = 0$ for all $n\geq N$ and the $k$-algebra homomorphism $A[t]\to A[t]$ given by $t\mapsto t$ and $a\mapsto \sum_{i\ge 0} \partial_i(a)t^i$ is a $k$-algebra isomorphism.  If only the first condition holds then the map $A[t]\to A[t]$ is still an injective endomorphism but need not be onto; we will call Hasse-Schmidt derivations for which only the first condition holds (i.e., there exists an integer $N=N(a)\geq 0$ such that $\partial_n(a) = 0$ for all $n\geq N$) a \emph{weakly locally nilpotent Hasse-Schmidt derivation}. 
\item 
A Hasse-Schmidt derivation $\partial=(\partial_n)$ is called \emph{iterative}
if $\partial_{i} \circ \partial_{j}=\binom{i+j}{i} \partial_{i+j}$ for all $i, j \geq 0.$ The collection of Hasse-Schmidt derivations of an algebra $A$ is denoted $\Der^H(A)$ and the collection of iterative Hasse-Schmidt derivations is denoted $\Der^{I}(A)$.
The collection of locally nilpotent Hasse-Schmidt derivations (resp. iterative Hasse-Schmidt derivations, resp. weakly locally nilpotent Hasse-Schmidt derivations) of $A$ is denoted $\LND^H(A)$ (resp. $\LND^{I}(A)$, resp. $\LND^{H'}(A)$).
 \item Given $\partial:=(\partial_n) \in \Der^{H}(A),$ the kernel of $\partial$ is defined to be
\[\ker (\partial) =\bigcap_{i\geq 1}\Ker(\partial_i).\]

\item The \emph{Makar-Limanov$^{*}$ invariant} of $A$ is defined to be 
\[\ML^{*}(A)=\bigcap_{\delta \in \LND^{*}(A)} \ker(\delta).\]
\item The \emph{Makar-Limanov$^{*}$ centre} of $A$ is defined to be 
\[\ML_Z^{*}(A)=\ML^*(A)\cap Z(A).\]
 \item We say that $A$ is $\LND^{*}$-{\it rigid} (respectively {\it strongly} $\LND^{*}$-{\it rigid}) if $\ML^{*}(A)=A$ (respectively $\ML^{*}(A[t_1,\ldots, t_d])=A$, for $d\geq 1$).
\item We say that $A$ is $\LND_Z^{*}$-{\it rigid} (respectively {\it strongly} $\LND_Z^{*}$-{\it rigid}) if $\ML_Z^{*}(A)$ is equal to $Z(A)$ (respectively $\ML_Z^{*}(A[t_1,\ldots, t_d])=Z(A)$, for $d\geq 1$).
\end{enumerate}
In items (g)--(j), $*$ is either blank, $I$, $H$, or $H'$.
\end{definition}

\begin{remark} Let $k$ be a field and let $A$ be a $k$-algebra.  We recall some basic facts about derivations and Hasse-Schmidt derivations. \label{rem:auto}
\begin{enumerate}
\item If $\partial:=(\partial_n)$ is a locally nilpotent Hasse-Schmidt derivation of $A$ then by definition
the map $G_{\partial,t}: A[t]\to A[t]$ defined by
\begin{equation}\label{Gdefinition}
 a \mapsto \sum_{i=0}^{\infty} \partial_i(a)t^i,\ \textrm{for~all}~a\in A, t\mapsto t
\end{equation}
extends to a $k$-algebra automorphism of $A[t]$ and when $\partial$ is a weakly locally nilpotent Hasse-Schmidt derivation then this map is an injective endomorphism.
\item Conversely, if one has a $k$-algebra automorphism (resp. endomorphism) $G:A[t]\to A[t]$ such that $G(t)=t$ and $G(a)-a\in tA[t]$ for $a\in A$, then for $a\in A$ we have
$$G(a) = \sum_{i=0}^{\infty} \partial_i(a) t^i,$$ and $(\partial_n)$ is a locally nilpotent Hasse-Schmidt derivation (resp. weakly locally nilpotent Hasse-Schmidt derivation) of $A$ (see \cite[Lemma 2.2 (3)]{BZ1}).
 \item If the characteristic of $k$ is zero and $\delta:A\to A$ is a $k$-linear derivation, then the only iterative Hasse-Schmidt derivation $(\partial_n)$ of $A$ with $\partial_{1}=\delta$ is given by
 \begin{equation}\label{higherderivation}
     \partial_{n}=\frac{\delta^{n}}{n !}
 \end{equation}
for $n\ge 0$. This iterative Hasse-Schmidt derivation is called the \emph{canonical Hasse-Schmidt derivation} associated to $\delta.$ 
If, moreover, $\delta$ is locally nilpotent, then by \cite[Lemma 2.2(2)]{BZ1}, the map $G_{\partial, t}$ defined in item (\ref{Gdefinition}) is an automorphism and $(\partial_n)$ is a locally nilpotent iterative Hasse-Schmidt derivation, and conversely if $(\partial_n)$ is locally nilpotent then so is $\delta$. Thus locally nilpotent iterative Hasse-Schmidt derivations correspond naturally to locally nilpotent derivations in the characteristic zero case and so $\ML^{I}(A)=\ML(A)$ for algebras with characteristic zero base field.

 \item \cite[\S 1.1]{M} If the characteristic of $k$ is a positive integer $p$, then for an iterative derivation $\partial=\{\partial_{i}\}_{i\geq 0}$, $\partial_i$ can be explicitly described as \[\partial_{i}=\frac{\left(\partial_{1}\right)^{i_{0}}\left(\partial_{p}\right)^{i_{1}} \ldots\left(\partial_{p^r}\right)^{i_{r}}}{\left(i_{0}\right) !\left(i_{1}\right)! \ldots\left(i_r\right)!},\] 
 where $i=i_{0}+i_{1} p+\cdots+i_{r} p^{r}$ is the base-$p$ expansion of $i$.
In this case, an iterative Hasse-Schmidt
derivation $\partial$ is completely determined by $\partial_{1}, \partial_{p}, \partial_{p^2}, \ldots$.
\item Let $T$ be the polynomial ring $A\left[t_{1}, \ldots, t_{d}\right]$ over a $k$-algebra $A$. We
fix an integer $1 \leq i \leq d$. For each $n \geq 0$, we can define a divided power $A$-linear
differential operator $\Delta_i^n$ as follows:
\begin{equation}\label{examplech}
\Delta_{i}^{n}: t_{1}^{m_{1}} \cdots t_{d}^{m_{d}} 
\longrightarrow\left\{
\begin{array}{ll}
\binom{m_i}{n}t_{1}^{m_1} \cdots t_{i}^{m_i-n} \cdots t_{d}^{m_d} 
& {\textrm{ if } m_{i} \geq n} \\ 
{0} & {\textrm{ otherwise, }}
\end{array}\right.
\end{equation}
where $\binom{m_{i}}{n}$ is defined in $\mathbb{Z}$ or in $\mathbb{Z} /(p)$. Then $\left\{\Delta_{i}^{n}\right\}_{n=0}^{\infty}$ is a locally nilpotent iterative
Hasse-Schmidt derivation of $T$.  We can also extend an element $(\partial_n)\in \LND^{H'}(A)$ to an element  of $\LND^{H'}(T)$ by declaring that $t_1,\ldots ,t_d$ are in the kernel of $(\partial_n)$; moreover, the extension is iterative if the original Hasse-Schmidt derivation is iterative, and it is in $\LND^H(T)$ if the original weakly locally nilpotent Hasse-Schmidt derivation is in $\LND^H(A)$. Combining this observation along with data from the maps $\Delta_i^n$,  we see $$\ML^*(A[t_1,\ldots ,t_d])\subseteq \ML^*(A),$$ where $*$ is either $I$, $H$, or $H'$.
\end{enumerate}
\end{remark}
We make use of the following definitions from \cite{BZ1, LWZ}.  
\begin{definition}
Let $A$ be an algebra.
\begin{enumerate}
\item We call $A$ \emph{cancellative} (respectively \emph{strongly cancellative}) if an isomorphism $A[t]\to B[t]$ with $B$ an algebra (respectively, for each $d\ge 1$, an isomorphism $A[t_1,\ldots ,t_d] \to B[t_1,\ldots ,t_d]$ with $B$ an algebra) implies that $A\cong B$.
\item We call $A$ \emph{retractable} (respectively \emph{strongly retractable}) if an isomorphism $\phi: A[t]\to B[t]$ for some algebra $B$ (respectively an isomorphism $\phi: A[t_1,\ldots ,t_d] \to B[t_1,\ldots ,t_d]$ for some integer $d\ge 1$ and some algebra $B$) implies that $\phi(A)=B$.
\item We call $A$ $Z$-\emph{retractable} (respectively \emph{strongly} $Z$-\emph{retractable}) if, given an algebra $B$ and an algebra isomorphism $\phi: A[t] \cong B[t]$  (respectively an isomorphism $\phi: A[t_1,\ldots ,t_d] \to B[t_1,\ldots ,t_d]$ for some integer $d\ge 1$ and some algebra $B$), we necessarily have $\phi(Z(A)) = Z(B)$.
\item We call $A$ \emph{detectable} (respectively \emph{strongly detectable}) if, given an algebra $B$ and an isomorphism $\phi : A[t]\to B[s]$ (respectively given an algebra $B$, $d\ge 1$, and an isomorphism $\phi : A[t_1,\ldots ,t_d]\to B[s_1,\ldots ,s_d]$), we necessarily have $s\in B\{\phi(t)\}$ (resp. $s_i\in B\{\phi(t_1),\ldots ,\phi(t_d)\} ~~~{\rm for}~i=1,\ldots ,d$).

\end{enumerate}
\end{definition}

We begin by proving a lemma, which is the counterpart of \cite[Lemma 3.2]{BZ1}.
\begin{lemma}
Let $Y := \bigoplus_{i=0}^{\infty}Y_i$ be an $\mathbb{N}$-graded $k$-algebra and suppose that $Y_0 y Y_0$ contains a regular element whenever $y$ is a nonzero homogeneous element of $Y$. If $Z$ is a subalgebra of $Y$
containing $Y_0$ such that ${\rm GKdim}(Z) = {\rm GKdim}(Y_0)<\infty$, then $Z=Y_0$.
\label{lem:BZ}\end{lemma}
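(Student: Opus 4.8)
The plan is to argue by contradiction, using the grading together with the regularity hypothesis to show that any element of $Z$ lying outside $Y_0$ forces the Gelfand--Kirillov dimension to strictly exceed $\mathrm{GKdim}(Y_0)$. Suppose $Z \neq Y_0$ and choose $z \in Z \setminus Y_0$. Writing $z = z_0 + z_1 + \cdots + z_d$ with each $z_i \in Y_i$ and $z_d \neq 0$, the assumption $z \notin Y_0$ guarantees that we may take $d \geq 1$. Since $z_d$ is a nonzero homogeneous element of $Y$, the hypothesis supplies a regular element $w \in Y_0 z_d Y_0$; say $w = \sum_j a_j z_d b_j$ with $a_j, b_j \in Y_0$. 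First I would replace $z$ by $u := \sum_j a_j z b_j$, which lies in $Z$ because each $a_j, b_j \in Y_0 \subseteq Z$, and whose component in degree $d$ is exactly $\sum_j a_j z_d b_j = w$. Thus I will have produced an element $u = w + (\text{terms of degree} < d)$ of $Z$ whose leading term $w$ is regular and homogeneous of degree $d \geq 1$.

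The heart of the argument is a growth estimate comparing the subalgebra $B \subseteq Z$ generated by $Y_0$ and $u$ against $Y_0$ itself. Fix a finite-dimensional subspace $\mathcal F \subseteq Y_0$ with $1 \in \mathcal F$, and set $V := \mathcal F + k u \subseteq B$. Because $w$ is regular, so is every power $w^n$ (the product of nonzerodivisors is a nonzerodivisor), and this is exactly what makes $u$ behave like a free variable over $Y_0$ at the level of leading terms. Concretely, for $g \in Y_0$ the element $g u^i$ has degree-$(id)$ leading term $g w^i$, so $g u^i = 0$ forces $g w^i = 0$ and hence $g = 0$; thus right multiplication by $u^i$ is injective on $Y_0$. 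Moreover, given $g_i \in \mathcal F^n$, the degree-$(jd)$ component of $\sum_{i=0}^n g_i u^i$ is $g_j w^j$, contributed solely by the $i=j$ summand, so the grading separates the summands and the sum $\sum_{i=0}^n \mathcal F^n u^i$ is direct. Since this sum sits inside $V^{2n}$ (using $1 \in \mathcal F$), I obtain $\dim V^{2n} \geq (n+1)\dim \mathcal F^n$. Feeding this into the definition of Gelfand--Kirillov dimension and taking the supremum over all such $\mathcal F$ yields $\mathrm{GKdim}(B) \geq 1 + \mathrm{GKdim}(Y_0)$. But $B \subseteq Z$ gives $\mathrm{GKdim}(B) \leq \mathrm{GKdim}(Z) = \mathrm{GKdim}(Y_0) < \infty$, a contradiction; hence $Z = Y_0$.

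The main obstacle, and the point where the regularity hypothesis is indispensable, is verifying both the injectivity of multiplication by $u^i$ and the directness of $\sum_{i=0}^n \mathcal F^n u^i$. Since $u$ is not homogeneous, one cannot merely read off the degree of $u^i$; the decisive fact is that its leading term $w$ is a nonzerodivisor, which at once guarantees $w^n \neq 0$ (so the leading terms never collapse) and rules out any cancellation among the $g_i w^i$ living in distinct degrees. I expect the passage from the bound $\dim V^{2n} \geq (n+1)\dim \mathcal F^n$ to the inequality $\mathrm{GKdim}(B) \geq 1 + \mathrm{GKdim}(Y_0)$ to be routine, resting only on the elementary facts that $\log(n+1)/\log(2n) \to 1$ and $\log n / \log(2n) \to 1$ together with the finiteness of $\mathrm{GKdim}(Y_0)$, which keeps the relevant growth exponents bounded.
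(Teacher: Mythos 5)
Your argument follows essentially the same strategy as the paper's, with the associated-graded machinery carried out by hand: the paper filters $Z$ by the grading on $Y$, passes to $\gr(Z)$, finds there a regular homogeneous element $a$ of positive degree so that $Y_0+Y_0a+\cdots$ is direct, and then invokes the inequality $\mathrm{GKdim}(Z)\ge \mathrm{GKdim}(\gr(Z))$ from Krause--Lenagan; you instead manufacture an element $u\in Z$ whose top homogeneous component $w$ is regular and run the growth estimate directly inside $Z$. Your route is fine and has the mild advantage of not needing the filtered-versus-graded comparison of Gelfand--Kirillov dimensions. However, one justification in your write-up is misstated: it is not true that the degree-$(jd)$ component of $\sum_{i=0}^n g_i u^i$ is $g_j w^j$ ``contributed solely by the $i=j$ summand.'' Since $u$ is inhomogeneous, $u^i$ for $i>j$ has components in every degree up to $id$, so $g_iu^i$ can certainly contribute to degree $jd$; the grading does \emph{not} separate the summands degree by degree. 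The directness of $\sum_{i\ge 0} Y_0 u^i$ is still true, but you must argue via the top index: if $\sum_{i=0}^n g_iu^i=0$ with not all $g_i$ zero, let $J$ be the largest index with $g_J\neq 0$; then only the $i=J$ summand reaches degree $Jd$, its component there is $g_Jw^J$, and regularity of $w^J$ (a product of regular elements) forces $g_J=0$, a contradiction. With that repair --- which is the same leading-term principle you already use to show right multiplication by $u^i$ is injective on $Y_0$ --- the proof is complete.
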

\begin{proof}
Suppose that $Z$ strictly contains $Y_0$ as a subalgebra.  Since $Y$ is a graded algebra, $Z$ is an $\mathbb{N}$-filtered algebra with $F_0Z = X$. By \cite[Lemma 6.5]{KL}, ${\rm GKdim}(Z)\ge {\rm GKdim}({\rm gr}(Z))$, where $
{\rm gr}(Z)$ is the associated graded ring of $Z$ with respect to the filtration induced by the $\mathbb{N}$-grading on $Y$. Then ${\rm gr}(Z)$ is an $\mathbb{N}$-graded subalgebra of $Y$ that strictly contains $Y_0$ as the degree 
zero part, and so ${\rm gr}(Z)$ contains some nonzero homogeneous element $y\in Y_d$ for some $d\ge 1$.  Then it contains the $Y_0$-$Y_0$-bimodule $Y_0 y Y_0\subseteq Y_d$.  In particular, there is some regular 
homogeneous element $a\in Z$ of positive degree and so by considering the grading we have $$Y_0+Y_0a+\cdots $$ is direct and is contained in ${\rm gr}(Z)$.  From this one can easily show that $${\rm GKdim}({\rm gr}(Z))\ge {\rm 
GKdim}(({\rm gr}(Z))_0) +1 \ge {\rm GKdim}(Y_0)+1.$$ Combining these inequalities gives $${\rm GKdim}(Z)\ge {\rm GKdim}(Y_0)+1,$$ a contradiction.  Thus $Z=Y_0$.  
\end{proof}
We will use Lemma \ref{lem:BZ} in the case when $A$ is a prime left Goldie algebra and $Y=A[t_1,\ldots ,t_d]$, where we declare that elements of $A$ have degree $0$, and $t_1,\ldots ,t_d$ are homogeneous of degree $1$.  Observe that if $p(t_1,\ldots,t_d)$ is a nonzero homogeneous polynomial of degree $m$ in $Y$, then we can put a degree lexicographic order on the monomials in $t_1,\ldots ,t_d$ by declaring that $t_1>t_2>\cdots > t_d$. Then we let $t_1^{i_1}\cdots t_d^{i_d}$ denote the degree lexicographically largest monomial that occurs in $p(t_1,\ldots ,t_d)$ with nonzero coefficient and we let $a\in A$ denote this coefficient.  Then since $A=Y_0$ is prime Goldie, $Y_0 a Y_0$ contains a regular element, and so $Y_0 p(t_1,\ldots ,t_d) Y_0$ contains a nonzero homogeneous polynomial $q=q(t_1,\ldots ,t_d)$ such that the degree lexicographically largest monomial that occurs in $q(t_1,\ldots, t_d)$ with nonzero coefficient has the property that this coefficient is regular; moreover, this monomial is again $t_1^{i_1}\cdots t_d^{i_d}$, and we let $c\in A$ denote this coefficient.  We now claim that $q$ must be regular.  To see this, let $h$ be a nonzero polynomial in $Y$.  Then let $t_1^{j_1}\cdots t_d^{j_d}$ denote the degree lexicographically largest monomial that occurs in $h$ with nonzero coefficient, and let $b\in A$ denote this coefficient.  Then by construction the coefficient of $t_1^{i_1+j_1}\cdots t_d^{i_d+j_d}$ in $qh$ is $cb$ and since $b$ is nonzero and $c$ is regular, $qh\neq 0$; similarly, $hq\neq 0$ and so $q$ is regular.  In particular, $Y$ satisfies the hypotheses of Lemma \ref{lem:BZ}, in this case, which we will now apply in the following proposition.
\begin{proposition}\label{M3}
Let $A$ be a finitely generated prime left Goldie $k$-algebra of finite Gelfand-Kirillov dimension. 
Let $*$ be either blank, $H$, $H'$ or $I$.  When $*$ is blank we further assume $k$ has characteristic zero.
\begin{enumerate}
 \item If $\ML^{*}(A[t])=A$, then $A$ is retractable and so is cancellative.
 \item If $\ML^{*}(A[t_1,\ldots,t_n])=A$, then $A$ is strongly-retractable and so is strongly cancellative. 
 \item {\rm (}\cite[Lemma 2.6]{LWZ}{\rm )} Suppose $Z(A)$ is affine and $\ML_Z(A[t]) = Z(A)$ or $\ML^H_Z (A[t]) = Z(A)$ or $\ML^{H'}_Z(A[t])=Z(A)$. Then $A$ is $Z$-retractable.
 \item {\rm (}\cite[Lemma 2.6]{LWZ}{\rm )} Suppose $Z(A)$ is affine and $A$ is strongly $\LND_Z^*$-rigid where $*$ is either blank, $H$, or $H'$. Then $A$ is strongly $Z$-retractable.
\end{enumerate}
\end{proposition}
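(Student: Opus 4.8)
The plan is to establish the retractability statements in (a) and (b) directly---parts (c) and (d) are quoted from \cite[Lemma 2.6]{LWZ}---and to deduce cancellativity immediately, since a retraction is automatically a cancellation: if an isomorphism $\phi\colon A[t]\to B[t]$ satisfies $\phi(A)=B$, then $\phi$ restricts to an isomorphism $A\to B$. The engine of the proof is a transfer-of-derivations step that produces a locally nilpotent Hasse-Schmidt derivation on $A[t]$ whose kernel is a copy of $B$, followed by a Gelfand-Kirillov dimension squeeze that pins this kernel down to be exactly $A$ via Lemma \ref{lem:BZ}.

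For (a), let $\phi\colon A[t]\to B[t]$ be an isomorphism. On $B[t]$ there is the divided-power iterative Hasse-Schmidt derivation $\{\Delta^n\}$ in the variable $t$ from Remark \ref{rem:auto}(5), whose kernel is exactly the degree-zero part $B$. I would pull it back through $\phi$ by setting $\partial_n':=\phi^{-1}\circ\Delta^n\circ\phi$. Because $\phi$ is an algebra isomorphism, conjugating the associated shear automorphism of $B[t][t']$ yields an automorphism of $A[t][t']$ of the form described in Remark \ref{rem:auto}(1)--(2), so $\partial'=\{\partial_n'\}$ is again a locally nilpotent (and iterative) Hasse-Schmidt derivation of $A[t]$; hence $\partial'$ lies in $\LND^*(A[t])$ for each relevant $*$, and its kernel is $\phi^{-1}(B)$. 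As $\ML^*(A[t])$ is the intersection of the kernels of all such derivations, it is contained in $\ker(\partial')$, so the hypothesis $\ML^*(A[t])=A$ forces $A\subseteq\phi^{-1}(B)$. The case where $*$ is blank reduces to $*=I$ through the identity $\ML(A[t])=\ML^I(A[t])$ valid in characteristic zero (Remark \ref{rem:auto}(3)), which is the reason for that standing assumption.

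To upgrade the inclusion $A\subseteq\phi^{-1}(B)$ to an equality, I would compare Gelfand-Kirillov dimensions. Since $A[t]\cong B[t]$ and GK dimension increases by one under a polynomial extension of a finitely generated algebra (see \cite{KL}), $\Gkdimension(B)=\Gkdimension(A)$, whence $\Gkdimension(\phi^{-1}(B))=\Gkdimension(A)$ because $\phi^{-1}$ restricts to an isomorphism $B\to\phi^{-1}(B)$. Now regard $Y:=A[t]$ as $\mathbb{N}$-graded with $Y_0=A$ and $\deg t=1$; the discussion following Lemma \ref{lem:BZ} shows that, $A$ being prime left Goldie, $Y$ meets the regular-element hypothesis of that lemma. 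Applying Lemma \ref{lem:BZ} with $Z:=\phi^{-1}(B)\supseteq Y_0$ and $\Gkdimension(Z)=\Gkdimension(Y_0)<\infty$ gives $Z=Y_0$, i.e.\ $\phi^{-1}(B)=A$ and $\phi(A)=B$. This proves (a). Part (b) is the same argument run with $d$ variables for each $d\ge 1$: the strong $\LND^*$-rigidity hypothesis furnishes $\ML^*(A[t_1,\ldots,t_d])=A$, one pulls back the $d$ commuting divided-power derivations $\{\Delta_i^n\}$ (common kernel $B$) to obtain $A\subseteq\phi^{-1}(B)$, and the identical GK-dimension squeeze with $Y=A[t_1,\ldots,t_d]$ graded by total degree yields $\phi^{-1}(B)=A$, hence strong retractability and strong cancellativity.

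I expect the main obstacle to lie not in the transfer of derivations, which is essentially formal once Remark \ref{rem:auto} is available, but in the dimension step: one must verify that the pulled-back subalgebra $\phi^{-1}(B)$ has exactly the GK dimension of $A$ and then genuinely exploit the prime Goldie hypothesis---through Lemma \ref{lem:BZ} and its accompanying regular-element analysis---to rule out $\phi^{-1}(B)$ being strictly larger than $A$. Finally, parts (c) and (d) are obtained by running the same argument with every derivation and subalgebra replaced by its intersection with the centre, using $Z(A[t_1,\ldots,t_d])=Z(A)[t_1,\ldots,t_d]$; this is precisely the content of \cite[Lemma 2.6]{LWZ}.
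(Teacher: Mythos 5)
Your argument is correct and is essentially the paper's own proof: the paper simply defers to \cite[Theorem 3.3]{BZ1}, whose content is exactly your pull-back of the divided-power Hasse--Schmidt derivation through $\phi$ followed by the Gelfand--Kirillov dimension squeeze, with Lemma \ref{lem:BZ} playing the role of \cite[Lemma 3.2]{BZ1}. Parts (c) and (d) are likewise quoted from \cite[Lemma 2.6]{LWZ} in both treatments, the only extra remark in the paper being that the $H'$ case runs the same way as $H$.
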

\begin{proof}
The proof is identical to the proof given in \cite[Theorem 3.3]{BZ1}, with the one exception being that we invoke Lemma \ref{lem:BZ} with $Y=A[t_1,\ldots ,t_d]$ (with elements of $A$ having degree $0$ and $t_1,\ldots ,t_d$ having degree $1$) as a replacement for Lemma 3.2 used in \cite{BZ1}.  We point out that \cite{LWZ} do not use $H'$ in their paper, but the argument in the $H'$ case goes through in the same manner as it does for $H$.
\end{proof}

In Proposition \ref{that} below, we give a result that is related to a conjecture of Makar-Limanov \cite[p. 55]{Ma1}, which has interesting implications in terms of cancellation.  To prove this result, we need to invoke a result of \cite{LWZ} that requires that the algebras involved by strongly Hopfian.  An algebra $A$ is \emph{strongly Hopfian} if whenever $d\ge 1$ and $\phi$ is a surjective endomorphism of $A[t_1,\ldots ,t_d]$, $\phi$ is necessarily injective. Prime Goldie algebras of finite Gelfand-Kirillov dimension are strongly Hopfian (see Krause and Lenagan \cite[Proposition 3.15]{KL}), which is used in the proof.
We first need a basic result about vanishing of polynomials in noncommutative rings.
\begin{remark}\label{rem: zero} Let $A$ be a prime ring and let $p(x)\in A[x]$ be a nonzero polynomial of degree $d$. If there are $d+1$ distinct central elements $z\in A$ such that $p(z)=0$ then $p(x)$ is the zero polynomial.
\end{remark}
\begin{proof} Write $p(x)=a_0+a_1 x+\cdots +a_d x^d$.  Let $Z$ denote the centre of $A$, which is an integral domain since $A$ is prime.  Suppose that there exist distinct $z_1,\ldots ,z_{d+1}\in Z$ such that $p(z_i)=0$ for $i=1,\ldots, d+1$.  Let $M$ be the $(d+1)\times (d+1)$ matrix whose $(i,j)$-entry is $z_j^{i-1}$.  Then considering $A^{d+1}$ as a right $M_{d+1}(Z)$-module, we see $[a_0,a_1,\ldots ,a_d]M = 0$.  Then right-multiplying by the classical adjoint of $M$ we obtain $a_i \det(M)=0$ for $i=0,\ldots ,d$.  Then $M$ is a Vandermonde matrix and since $Z$ is an integral domain and the $z_i$ are pairwise distinct, $\det(M)$ is a nonzero central element of $A$, and hence is regular since $A$ is prime.  It follows that $a_0=\cdots =a_d=0$ and $p(x)$ is the zero polynomial.
\end{proof}

\begin{proposition}\label{chrigid}
Let A be a prime finitely generated $k$-algebra with infinite centre. Then $\ML^{H'}(A)=\ML^{H'}(A[x_1, x_2, \cdots, x_d]).$ In particular, if, in addition, $A$ is left Goldie, has finite Gelfand-Kirillov dimension, $Z(A)$ is affine, and either $\ML^{H'}_Z(A)=Z(A)$ or $\ML^{H'}(A)=A$, then $A$ is strongly cancellative.
\label{that}
\end{proposition}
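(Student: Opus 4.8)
The plan is to establish the asserted equality by proving the two containments separately. The containment $\ML^{H'}(A[x_1,\dots,x_d])\subseteq\ML^{H'}(A)$ has already been recorded in Remark \ref{rem:auto}(5), so the whole content is the reverse containment $\ML^{H'}(A)\subseteq\ML^{H'}(A[x_1,\dots,x_d])$. Since both invariants are intersections of kernels, I would argue by contraposition: it suffices to show that if $a\in A$ and there exist $\partial=(\partial_n)\in\LND^{H'}(A[x_1,\dots,x_d])$ and $i\ge 1$ with $\partial_i(a)\ne 0$, then already $a\notin\ML^{H'}(A)$. Working in the weakly locally nilpotent class $\LND^{H'}$ is what keeps this elementary: membership requires only the eventual-vanishing condition, with no automorphism of a polynomial ring to check, and eventual vanishing is obviously preserved under the ring homomorphisms introduced below.

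The key device is central specialization. For a tuple $c=(c_1,\dots,c_d)\in Z(A)^d$, the rule $a\mapsto a$ for $a\in A$ together with $x_j\mapsto c_j$ extends to a $k$-algebra homomorphism $\varepsilon_c\colon A[x_1,\dots,x_d]\to A$; this is well defined precisely because each $c_j$ is central, so it commutes with the image of $A$ and with the other $c_{j'}$. Given $\partial=(\partial_n)\in\LND^{H'}(A[x_1,\dots,x_d])$, set $\partial^{(c)}_n:=\varepsilon_c\circ(\partial_n|_A)\colon A\to A$. First I would verify that $\partial^{(c)}:=(\partial^{(c)}_n)$ lies in $\LND^{H'}(A)$: one has $\partial^{(c)}_0=\mathrm{id}_A$ since $\partial_0=\mathrm{id}$ and $\varepsilon_c|_A=\mathrm{id}_A$; the Hasse--Schmidt relations $\partial^{(c)}_n(ab)=\sum_{i=0}^n\partial^{(c)}_i(a)\partial^{(c)}_{n-i}(b)$ follow by applying the homomorphism $\varepsilon_c$ to the corresponding relations for $\partial$; and weak local nilpotence is inherited because $\varepsilon_c(0)=0$. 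Thus $\partial^{(c)}\in\LND^{H'}(A)$ for every central tuple $c$.

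The crux is then to choose $c$ so that specialization does not annihilate $\partial_i(a)$, and this is exactly where primeness and the infinitude of the centre enter. I would isolate the following nonvanishing lemma: if $A$ is prime with $Z(A)$ infinite and $p\in A[x_1,\dots,x_d]$ is nonzero, then there is a tuple $c\in Z(A)^d$ with $p(c)\ne 0$. This is proved by induction on $d$. The case $d=1$ is the contrapositive of Remark \ref{rem: zero} together with the infinitude of $Z(A)$: a nonzero polynomial of degree $m$ in $A[x_1]$ has at most $m$ central roots, so some $c_1\in Z(A)$ avoids them all. For the inductive step, write $p=\sum_j q_j\,x_d^{\,j}$ with $q_j\in A[x_1,\dots,x_{d-1}]$, pick $j_0$ with $q_{j_0}\ne 0$, and apply the inductive hypothesis to $q_{j_0}$ to obtain central $c_1,\dots,c_{d-1}$ with $q_{j_0}(c_1,\dots,c_{d-1})\ne 0$; then $p(c_1,\dots,c_{d-1},x_d)\in A[x_d]$ is nonzero, and the case $d=1$ produces $c_d$ with $p(c_1,\dots,c_d)\ne 0$. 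Applying the lemma to $p=\partial_i(a)$ yields a central $c$ with $\partial^{(c)}_i(a)=\varepsilon_c(\partial_i(a))=p(c)\ne 0$, so $a\notin\ker(\partial^{(c)})$; as $\partial^{(c)}\in\LND^{H'}(A)$ we have $\ML^{H'}(A)\subseteq\ker(\partial^{(c)})$, whence $a\notin\ML^{H'}(A)$, completing the contraposition and hence the equality. I expect this selection of a good central point to be the main obstacle: it is the only step that genuinely uses primeness and the size of the centre, whereas the construction of $\partial^{(c)}$ is routine once $\varepsilon_c$ is seen to be a homomorphism.

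For the final clause I would invoke Proposition \ref{M3}, which applies because $A$ is finitely generated, prime, left Goldie of finite Gelfand--Kirillov dimension; note that with $*=H'$ no characteristic hypothesis is required. If $\ML^{H'}(A)=A$, then the equality just proved gives $\ML^{H'}(A[t_1,\dots,t_n])=A$ for all $n\ge 1$, and Proposition \ref{M3}(b) yields that $A$ is strongly retractable, hence strongly cancellative. If instead $\ML^{H'}_Z(A)=Z(A)$, I would compute, for each $d\ge 1$, that $\ML^{H'}_Z(A[t_1,\dots,t_d])=\ML^{H'}(A[t_1,\dots,t_d])\cap Z(A[t_1,\dots,t_d])$; using the equality of invariants, the identity $Z(A[t_1,\dots,t_d])=Z(A)[t_1,\dots,t_d]$, and the containment $\ML^{H'}(A)\subseteq A$, this collapses to $\ML^{H'}(A)\cap Z(A)=\ML^{H'}_Z(A)=Z(A)$. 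Hence $A$ is strongly $\LND^{H'}_Z$-rigid, and Proposition \ref{M3}(d) gives that $A$ is strongly $Z$-retractable; the remaining passage to strong cancellation is the subtler point, and is where the affineness of $Z(A)$ is used, via cancellation for the commutative domain $Z(A)$, in the same spirit in which retractability is leveraged into cancellation in Proposition \ref{M3}.
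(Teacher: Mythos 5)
Your proof of the equality $\ML^{H'}(A)=\ML^{H'}(A[x_1,\dots,x_d])$ is correct and is essentially the paper's argument: both specialize the polynomial variables at central elements to manufacture weakly locally nilpotent Hasse--Schmidt derivations of $A$, then use the infinitude of $Z(A)$ together with Remark \ref{rem: zero} to force $\partial_n(a)=0$ for $n\ge 1$; your multivariate nonvanishing lemma plus contraposition is only a cosmetic repackaging of the paper's one-variable case followed by induction on the number of variables, and your direct verification of the Hasse--Schmidt axioms for $\partial^{(c)}$ replaces the paper's appeal to the endomorphism characterization in Remark \ref{rem:auto}. The one place you stop short of a complete proof is the final passage in the $\ML^{H'}_Z(A)=Z(A)$ case: the paper goes from strong $Z$-retractability to strong detectability via \cite[Lemma 3.2]{LWZ} (this is where affineness of $Z(A)$ is used) and then to strong cancellativity via \cite[Lemma 3.6]{LWZ}, using that prime Goldie algebras of finite Gelfand--Kirillov dimension are strongly Hopfian \cite[Proposition 3.15]{KL}; you correctly locate this as the remaining step but only gesture at it, and the mechanism is detectability rather than ``cancellation for $Z(A)$'' as you describe.
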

\begin{proof}
Remark \ref{rem:auto} gives that $\ML^{H'}(A[x_1, x_2, \ldots, x_d])\subseteq \ML^{H'}(A)\subseteq A$ for all $d\geq 1$. It thus suffices to show that $\ML^{H'}(A)\subseteq \ML^{H'}(A[x_1, x_2, \ldots, x_d]).$

We show that $\ML^{H'}(A)\subseteq \ML^{H'}(A[x])$. Once we have proved this, it will immediately follow by induction that $\ML^{H'}(A)\subseteq \ML^{H'}(A[x_1,\ldots ,x_d])$ and we will obtain the result.
Let $\partial:=\{\partial_n\}_{n\geq 0}$ be an element of $\LND^{H'}(A[x])$.  As in Equation (\ref{Gdefinition}), we have an induced $k$-algebra homomorphism $\phi: A[t]\longrightarrow A[x][t]$, given by \begin{equation*}
\begin{aligned}
 \phi(a)=\sum_{n\ge 0}\partial_n(a)t^n~~ \textrm{for}\,\, a\in A, ~~\phi(t)=t.
\end{aligned}
\end{equation*}
In particular, if $a\in \ML^{H'}(A)$, then $\phi(a)=a+t p(x,t)$, for some polynomial $p(x,t)\in A[x,t]$.  We now fix $z\in Z(A)$ and consider the map $e_z : A[x,t]\to A[t]$, defined by $e_z(g(x,t))=g(z,t)$.
Then the composition $\phi_z:=e_z\circ \phi$ gives a homomorphism from $A[t]$ to $A[t]$ and by construction $\phi_z(a)\equiv a~(\bmod~(t))$, and $\phi(t)=t$ and so this homomorphism is injective.  Thus there are maps $\mu_j :A \to A$ with $\mu_0={\rm id}_A$ such that 
$\phi_z(a) = \sum_{j\ge 0} \mu_j(a) t^j$ for $a\in A$.  In particular for $a\in A$, $\mu_n(a)=0$ for $n$ sufficiently large, and so $(\mu_n)$ is a weakly locally nilpotent Hasse-Schmidt derivation of $A$.  Thus for $a\in \ML^{H'}(A)$ we have $\mu_i(a)=0$ for every $i\ge 1$; that is, for $i\ge 1$, $\partial_i(a)|_{x=z}=0$ for every $z\in Z(A)$.  Since $Z(A)$ is infinite and $\partial_n(a)$ is a polynomial in $A[x]$, Remark \ref{rem: zero} gives that $\partial_n(a)=0$ for $n\ge 1$ and hence $a\in \ML^{H'}(A[x])$. Thus $\ML^{H'}(A)\subseteq \ML^{H'}(A[x])$ as required.

Now suppose that $Z(A)$ is infinite and affine and that $A$ is prime left Goldie and has finite Gelfand-Kirillov dimension. It follows that if $\ML_Z^{H'}(A)=Z(A)$ then from the above $ML_Z^{H'}(A[x_1,\ldots ,x_d])=Z(A)$ and so $A$ is strongly $\LND_Z^{H'}$-rigid and hence strongly $Z$-retractable by Proposition \ref{M3}. Thus by \cite[Lemma 3.2]{LWZ}, $A$ is strongly detectable, and so is strongly cancellative \cite[Lemma 3.6]{LWZ}, since $A$ is strongly Hopfian (cf. Krause and Lenagan \cite[Proposition 3.15]{KL}). On the other hand if $\ML^{H'}(A)=A$ then $A$ is strongly $\LND^{H'}$-rigid and so by Proposition \ref{M3}, $A$ is strongly cancellative.
\end{proof}

In analogy with terminology from algebraic geometry, given an algebraically closed field $k$ and a finitely generated extension $F$ of $k$, we will say that $F$ is \emph{uniruled} over $k$ if there is a finitely generated field extension $E$ of $k$ with ${\rm trdeg}_k(E)={\rm trdeg}_k(F)-1$ and an injective $k$-algebra homomorphism $F\to E(t)$. The idea here is that $F$ is the function field of a normal projective scheme $X$ of finite type over $k$.  Then the condition $F\subseteq E(t)$ says that there is a dominant rational map $Y\times \mathbb{P}^1\dashrightarrow X$ for some variety $Y$ with ${\rm dim}(Y)={\rm dim}(X)-1$.  Similarly, if $F$ is the function field of a normal projective scheme $X$ of finite type over $k$, we define the \emph{Kodaira dimension} of $F$ to be the Kodaira dimension of $X$.  Since Kodaira dimension is a birational invariant, this is well-defined.  We refer the reader to Hartshorne \cite{Har} for background in algebraic geometry and on Kodaira dimension.  If $k$ has characteristic zero, a uniruled variety has Kodaira dimension $-\infty$ and the converse holds in dimensions one, two, and three; the main conjectures of the minimal model program imply that the converse should hold in higher dimensions, too. 

Over uncountable fields, affine uniruled varieties have a pleasant characterization in terms of being covered by affine lines (see \cite{Jel, Sta}).
\begin{proposition}
Let $k$ be an uncountable algebraically closed field and let $X$ be an irreducible affine variety over $k$ of dimension at least one. Then
following conditions are equivalent:
\begin{enumerate}
\item for every $x\in X$ there is a polynomial affine curve $Y_x$ in $X$ that passes through $x$;
\item there is a Zariski-dense open subset $U$ of $X$, such that for every $x\in U$ there is a polynomial affine curve $Y_x$ in $X$ that passes through $x$;
\item $X$ is uniruled; that is, there exists an affine variety $Y$ with ${\rm dim}(Y)={\rm dim}(X)-1$ and a dominant morphism
$Y\times \mathbb{A}^1\to X$.
\end{enumerate}
\label{prop:uniruled}
\end{proposition}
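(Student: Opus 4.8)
I would prove the three conditions equivalent by establishing the cycle (a) $\Rightarrow$ (b) $\Rightarrow$ (c) $\Rightarrow$ (a). The implication (a) $\Rightarrow$ (b) is immediate, taking $U=X$. For the rest, fix a closed embedding $X\subseteq \mathbb{A}^N$ and, for each $d\ge 1$, let $M_d$ denote the variety of morphisms $\phi:\mathbb{A}^1\to X$ whose coordinate functions are polynomials of degree at most $d$; recording the coefficients of $\phi$ exhibits $M_d$ as a closed subvariety of an affine space, cut out by the conditions that $f\circ\phi$ vanish identically in the parameter for each defining equation $f$ of $X$. Let $M_d^{\circ}\subseteq M_d$ be the open locus of nonconstant $\phi$, and let $\mathrm{ev}_d:M_d^{\circ}\times\mathbb{A}^1\to X$, $(\phi,t)\mapsto\phi(t)$, be the universal evaluation morphism; a point of $X$ lies on a polynomial affine curve exactly when it lies in the image of some $\mathrm{ev}_d$.

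For (b) $\Rightarrow$ (c), the hypothesis says that the dense open set $U$ is contained in $\bigcup_{d\ge 1}\mathrm{ev}_d(M_d^{\circ}\times\mathbb{A}^1)$. Setting $Z_d:=\overline{\mathrm{ev}_d(M_d^{\circ}\times\mathbb{A}^1)}$, we obtain $X=(X\setminus U)\cup\bigcup_d Z_d$, a countable union of closed subsets. Since an irreducible variety over an uncountable algebraically closed field is never a countable union of proper closed subvarieties (a general curve through a chosen point is contained in no member, yet its uncountably many points would be covered by the countably many finite sets in which it meets the members), and since $X\setminus U\ne X$, some $\mathrm{ev}_d$ must be dominant. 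Choosing an irreducible component $M\subseteq M_d^{\circ}$ whose evaluation $\mathrm{ev}:M\times\mathbb{A}^1\to X$ remains dominant, I set $m:=\dim M$ and $n:=\dim X$, so that $m\ge n-1$. If $m=n-1$ I take $Y:=M$. If $m>n-1$, I reduce the dimension of the parameter space by repeatedly cutting with general hyperplanes in a projective completion $\overline{M}$: for general $x\in X$ the fibre $\mathrm{ev}^{-1}(x)$ has dimension $m+1-n$ and, because a nonconstant $\phi$ has finite $\phi^{-1}(x)$, projects with finite fibres onto an $(m+1-n)$-dimensional subvariety of $M$; a Bertini-type argument then shows that a general hyperplane section still meets this subvariety for general $x$, so dominance is preserved while $\dim M$ drops by one. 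After $m-(n-1)$ such cuts I arrive at a $Y$ with $\dim Y=n-1$ and $Y\times\mathbb{A}^1\to X$ dominant, which is the required uniruling.

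For (c) $\Rightarrow$ (a) I must pass a genuine affine line through \emph{every} point of $X$, not merely a general one. The plan is to fix a projective completion $X\subseteq\overline{X}$ with boundary $\partial X:=\overline{X}\setminus X$, and to extend the covering family of affine lines supplied by the uniruling to a family of rational curves on $\overline{X}$ that meet $\partial X$; I then parametrize this family by a \emph{proper} space, for instance a Chow variety of $1$-cycles of bounded degree. Because the parameter space is proper, the total evaluation map has closed image in $\overline{X}$, and since that image is dense it is all of $\overline{X}$; hence every point of $X$ lies on a member of the family. From the member through a prescribed point $x_0\in X$ one extracts an irreducible rational component that is not contained in $\partial X$ and meets $\partial X$ in a single point, and deleting that point produces a polynomial affine curve through $x_0$.

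The step I expect to be the genuine obstacle is this last extraction. A limiting member of a proper family of curves may be reducible, non-reduced, or may meet the boundary in several points, so producing an honest image of $\mathbb{A}^1$ through an arbitrary (possibly singular) point of $X$ — rather than merely a complete rational curve, or a curve with several punctures — requires the full theory of rational curves on varieties together with general-position choices that rely essentially on the uncountability of $k$. This is precisely the content I would import from \cite{Jel, Sta}. By contrast, the delicate point in (b) $\Rightarrow$ (c), namely that cutting the affine parameter space by hyperplanes can let intersection points escape to the boundary, is handled routinely by taking the hyperplanes general, and the remaining implications are elementary.
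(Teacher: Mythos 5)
First, a point of comparison: the paper does not prove Proposition \ref{prop:uniruled} at all --- it is stated as a known result imported from \cite{Jel, Sta}, with no proof environment following it --- so there is no in-paper argument to measure yours against. On its own merits, your proposal handles (a) $\Rightarrow$ (b) trivially and gives an essentially correct sketch of (b) $\Rightarrow$ (c): parametrizing degree-$\le d$ polynomial maps by the affine varieties $M_d$, using that an irreducible variety over an uncountable field is not a countable union of proper closed subsets to extract a dominant evaluation map, and then cutting the parameter space down to dimension $\dim X - 1$ is the standard argument. (To make the hyperplane-cutting step airtight you should work at the generic point of $X$: the image in $M$ of the generic fibre of $\mathrm{ev}$ is a positive-dimensional constructible set over $k(X)$, the hyperplanes whose intersection with its closure lies entirely in the complement form a proper closed subset of the dual space over $k(X)$, and $k$-points avoid it because $k$ is infinite; as written, your ``general $H$ works for general $x$'' has the quantifiers in the wrong order.) It is worth noting that (b) $\Rightarrow$ (c) is the only implication the paper actually invokes, in the proof of Corollary \ref{corn}.

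The genuine gap is (c) $\Rightarrow$ (a), and you have located it precisely but not closed it. The properness/Chow-variety argument only produces, through an arbitrary $x_0 \in X$, a limit $1$-cycle: it may be reducible or non-reduced, components may lie entirely in $\partial X$, and the component through $x_0$ --- even granting that it is rational --- may meet $\partial X$ in two or more points, or in a single point at which the normalization has several branches. In every such case the affine part fails to be a polynomial curve: if the normalization of an affine curve is $\mathbb{P}^1$ minus $r \ge 2$ points, any nonconstant map from $\mathbb{A}^1$ onto it would lift and extend to a surjection $\mathbb{P}^1 \to \mathbb{P}^1$ whose restriction to $\mathbb{A}^1$ must omit all $r$ deleted points, forcing their preimages into the single point at infinity, which is impossible. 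Excluding these degenerations is exactly the content of the theorem of Jelonek and Stasica, so when you write that this step ``is precisely the content I would import from \cite{Jel, Sta}'' you are citing the statement rather than proving it. As a self-contained proof the proposal is therefore incomplete in this one direction; as a reduction to the literature it is no weaker than the paper's own treatment, which consists entirely of the citation.
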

\begin{corollary} Let $k$ be an uncountable algebraically closed field, let $A$ be a finitely generated prime left Goldie $k$-algebra of finite Gelfand-Kirillov dimension, and suppose that $Z(A)$ is affine.  If $A$ does not possess the strong cancellation property then ${\rm Frac}(Z(A))$ is uniruled.   In particular, if $k$ has characteristic zero and ${\rm Frac}(A)$ has nonnegative Kodaira dimension then $A$ is strongly cancellative. \label{corn}
\end{corollary}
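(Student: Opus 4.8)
The plan is to prove the first implication directly---assuming $A$ is not strongly cancellative and producing a uniruled structure on $\Frac(Z(A))$---and then to extract the ``In particular'' statement from the characteristic-zero implication ``uniruled $\Rightarrow$ Kodaira dimension $-\infty$'' recorded before Proposition~\ref{prop:uniruled}. Since $k$ is algebraically closed it is infinite, so $Z(A)\supseteq k$ is an infinite affine commutative domain and Proposition~\ref{chrigid} applies to $A$. Suppose $A$ is not strongly cancellative. Then the contrapositive of Proposition~\ref{chrigid} rules out $\ML^{H'}_Z(A)=Z(A)$, so there exist $z\in Z(A)$ and $\partial=(\partial_n)\in\LND^{H'}(A)$ with $\partial_i(z)\neq 0$ for some $i\geq 1$.

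The first key step is to show that $\partial$ preserves the centre, i.e.\ $\partial_n(Z(A))\subseteq Z(A)$ for all $n$, so that $\partial$ restricts to a nonzero weakly locally nilpotent Hasse--Schmidt derivation of $R:=Z(A)$. I would argue by induction on $n$: expanding $\partial_n(za)=\partial_n(az)$ for $z\in Z(A)$ and $a\in A$ by the Leibniz rule and using that $\partial_0(z),\dots,\partial_{n-1}(z)$ are central (so that all cross terms cancel and $z\,\partial_n(a)=\partial_n(a)\,z$), one is left with $\partial_n(z)\,a=a\,\partial_n(z)$ for every $a\in A$, whence $\partial_n(z)\in Z(A)$.

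Next I would translate this into geometry. Put $X=\Spec(R)$, an irreducible affine variety over the uncountable algebraically closed field $k$; moreover $\dim X\geq 1$, since a zero-dimensional affine domain over $k$ equals $k$ and admits no nonzero Hasse--Schmidt derivation. By Remark~\ref{rem:auto} the map $G_{\partial,t}\colon R[t]\to R[t]$, $a\mapsto\sum_n\partial_n(a)t^n$, $t\mapsto t$, is an injective endomorphism, and composing $R\xrightarrow{G_{\partial,t}}R[t]=\mathcal{O}(X\times\mathbb{A}^1)$ yields a morphism $\Phi\colon X\times\mathbb{A}^1\to X$ with $\Phi|_{X\times\{0\}}=\mathrm{id}_X$, because $G_{\partial,t}(a)\equiv a\pmod t$. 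As some $\partial_i$ with $i\geq 1$ is nonzero on $R$, the common zero locus of $\{\partial_n(a):n\geq 1,\ a\in R\}$ is a proper closed subset, so its complement $U\subseteq X$ is dense open; for each $x\in U$ the map $t\mapsto\Phi(x,t)$ is a nonconstant morphism $\mathbb{A}^1\to X$ passing through $x=\Phi(x,0)$, i.e.\ a polynomial affine curve through $x$. Then Proposition~\ref{prop:uniruled} (condition (b) implies condition (c)) shows $X$ is uniruled, and unwinding the definition this is exactly the assertion that $\Frac(Z(A))$ is uniruled.

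For the ``In particular'' claim, assume $\ch k=0$ and that $\Frac(A)$ has nonnegative Kodaira dimension, where the Kodaira dimension of the simple Artinian ring $\Frac(A)$ is understood as that of its centre $\Frac(Z(A))$. A uniruled variety over a characteristic-zero field has Kodaira dimension $-\infty$, so nonnegativity forces $\Frac(Z(A))$ to be non-uniruled; by the contrapositive of the first part, $A$ is strongly cancellative. The step I expect to demand the most care is the geometric one: because $\partial$ is only weakly locally nilpotent, $G_{\partial,t}$ is merely an injective endomorphism (not an automorphism), so instead of invoking a genuine $\mathbb{G}_a$-action one must check by hand that the curves $t\mapsto\Phi(x,t)$ are nonconstant over a dense open set and that this family really witnesses condition (b) of Proposition~\ref{prop:uniruled}.
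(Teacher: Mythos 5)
Your argument is correct and follows essentially the same route as the paper's: reduce via Proposition~\ref{chrigid} to producing a nontrivial weakly locally nilpotent Hasse--Schmidt derivation of $Z(A)$, convert it into a morphism $\Phi\colon X\times\mathbb{A}^1\to X$ restricting to the identity on $X\times\{0\}$, and invoke Proposition~\ref{prop:uniruled} on a dense open set of points whose orbit curves are nonconstant. The only differences are cosmetic: you argue the implication directly rather than contrapositively, you obtain the dense open set from the common vanishing locus of the $\partial_n(a)$ ($n\ge 1$) instead of the paper's diagonal trick, and you supply the (correct) induction showing that Hasse--Schmidt derivations preserve the centre, a fact the paper uses without proof.
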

\begin{proof}
We claim that if $Z(A)$ has a non-trivial weakly locally nilpotent Hasse-Schmidt derivation then the field of fractions of $Z(A)$ is necessarily uniruled.  To see this, suppose that $\partial:=(\partial_n)$ is a non-trivial weakly locally nilpotent Hasse-Schmidt derivation of $Z(A)$. Then by Remark \ref{rem:auto} we have an injective 0 $G_{\partial} : Z(A)[t]\to Z(A)[t]$ that sends $t$ to $t$, and by assumption $G_{\partial}$ is not the identity on $Z(A)$.  Let $X={\rm Spec}(Z(A))$, which is an affine scheme of finite type over $k$.
Then the induced $k$-algebra homomorphism $$Z(A)\to Z(A)\otimes _k k[t]=Z(A)[t]$$ from $G_{\partial}$ yields a morphism $\Phi: \mathbb{A}^1\times X\to X$ with $\Phi(0,x)=x$ for $x\in X$, and since $(\partial_n)$ is non-trivial, there is some $x\in X$ such that $\Phi(\mathbb{A}^1\times \{x\})$ is not a point.  We claim there is a Zariski dense open set $U\subseteq X$ such that for $x\in U$ we have 
$\Phi(\mathbb{A}^1\times \{x\}) = Y_x\subseteq X$ with $Y_x$ birationally isomorphic to $\mathbb{P}^1$.  To see this, notice that for each $x\in X$, $\Phi$ gives a map from $\mathbb{A}^1\to Y_x$.  Since $\mathbb{A}^1$ is one-dimensional and irreducible, $Y_x$ is either a point or an irreducible curve.  Moreover, if $Y_x$ is a curve, then we have a dominant rational map $\mathbb{P}^1\dashrightarrow Y_x$ and so $Y_x$ is birationally isomorphic to $\mathbb{P}^1$ by L\"uroth's theorem.  So now let $V$ denote the set of $x\in X$ such that $Y_x$ is a point.  Now there is at least one point $x\in X$ such that $Y_x$ is infinite, so pick $p,q\in \mathbb{A}^1$ and $x_0\in X$ such that $\Phi(p,x_0)\neq \Phi(q,x_0)$.  Then 
$\Psi: X\to X\times X$ given by $x\mapsto (\Phi(p,x), \Phi(q,x))$ is a morphism and since the diagonal $\Delta$ is closed in $X\times X$, $Y:=\Psi^{-1}(\Delta)$ is a closed subvariety of $X$ and by assumption $x_0\not\in Y$ and so $Y$ is proper.  Thus $U:=X\setminus Y$ has the property that $\Phi(p,x)\neq \Phi(q,x)$ for $x\in U$ and so $Y_x$ is necessarily a rational curve for $x\in U$.  Thus $X$ has an open dense subset such that each $k$-point in $U$ is covered by rational curves and so $X$ is uniruled by Proposition \ref{prop:uniruled}. In particular, there is a dominant rational map from a variety of the form $Y\times \mathbb{P}^1$ to $X$, where ${\rm dim}(Y)={\rm dim}(X)-1$.  Thus ${\rm Frac}(A)= k(X)\hookrightarrow k(Y\times \mathbb{P}^1) \cong k(Y)(t)$ and so $F$ is uniruled.  Thus if $F$ is not uniruled then $\ML^{H'}(Z(A))=Z(A)$ and since an element of $\LND^{H'}(A)$ restricts to an element of $\LND^{H'}(Z(A))$, $\ML^{H'}_Z(A)=Z(A)$ and so $A$ is strongly cancellative by Proposition \ref{chrigid}. 
\end{proof}

The above result shows under certain conditions that if the centre of an algebra is sufficiently ``rigid'' then the algebra is strongly cancellative.  We conjecture that over ``nice'' base fields the centre completely determines cancellation.  We make this precise with the following conjecture. 
\begin{conjecture} Let $k$ be an uncountable algebraically closed field of characteristic zero and let $A$ be an affine noetherian domain over $k$.  Suppose that $Z(A)$ is affine and cancellative (respectively strongly cancellative).  Then $A$ is cancellative  (resp. strongly cancellative).\label{conic}
\end{conjecture}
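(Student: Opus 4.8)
The guiding principle is the one developed in Proposition \ref{that} and Corollary \ref{corn}: cancellation of $A$ should be governed by the behaviour of its centre. The one completely forced step is to restrict an isomorphism to centres. Given an isomorphism $\phi : A[t]\to B[t]$ with $B$ affine, $\phi$ carries $Z(A[t])$ isomorphically onto $Z(B[t])$, and since $t$ is central we have $Z(A[t])=Z(A)[t]$ and $Z(B[t])=Z(B)[t]$. Thus $\phi$ restricts to an isomorphism $\bar\phi : Z(A)[t]\to Z(B)[t]$, and cancellativity of $Z(A)$ yields $Z(A)\cong Z(B)$; in the strong case one argues identically with $t_1,\ldots,t_d$. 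The real content is then to promote the pair of facts $\bigl(A[t]\cong B[t],\ Z(A)\cong Z(B)\bigr)$ to an isomorphism $A\cong B$.

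Following the chain in Proposition \ref{that}, the cleanest way to finish would be to show that $A$ is strongly $Z$-retractable, that is, $\phi(Z(A))=Z(B)$; strong detectability would then follow from \cite[Lemma 3.2]{LWZ} and strong cancellation from \cite[Lemma 3.6]{LWZ}, noting that an affine noetherian domain is prime left Goldie and, when of finite Gelfand--Kirillov dimension, strongly Hopfian by \cite[Proposition 3.15]{KL}. Here one meets the essential difficulty. The identity $\bar\phi(Z(A))=Z(B)$ is exactly the assertion that the commutative algebra $Z(A)$ is \emph{retractable}, and cancellativity is strictly weaker than retractability: the algebra $k[x]$ is cancellative by Abhyankar--Eakin--Heinzer, but the shear automorphism of $k[x][t]$ with $x\mapsto x+t$ and $t\mapsto t$ does not preserve $k[x]$, so $k[x]$ is not retractable. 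Hence one cannot obtain $Z$-retractability of $A$ for free from the hypothesis on the centre.

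There is, however, a genuine loophole that a proof should exploit: not every automorphism of $Z(A)[t]$ extends to one of $A[t]$, so $A$ may still be $Z$-retractable even though $Z(A)$ is not retractable, provided the ``offending'' Hasse--Schmidt derivations of $Z(A)$ fail to lift to $A$. Concretely, by Corollary \ref{corn} a failure of strong cancellation forces $\Frac(Z(A))$ to be uniruled, so $Z(A)$ must carry a nontrivial weakly locally nilpotent Hasse--Schmidt derivation $\partial$ with $\ML^{H'}(Z(A))\neq Z(A)$. The plan is to show that any such $\partial$ realizing a non-cancellation witness for $A$ must be the restriction of a locally nilpotent Hasse--Schmidt derivation of $A$, and then to run the argument of Corollary \ref{corn} relatively over $X=\Spec Z(A)$: the automorphism $G_\partial$ of $Z(A)[t]$ sweeps out rational curves in $X$, and lifting it to $A[t]$ should force these curves to move in an algebraic family that furnishes an actual non-cancellation witness for $Z(A)$ itself, contradicting the hypothesis.

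The main obstacle is precisely in closing this last step: uniruledness of $\Frac(Z(A))$ is necessary but by no means sufficient for non-cancellation (witness $Z(A)\cong k[x]$), so the existence of the derivation $\partial$ does not on its own contradict cancellativity of $Z(A)$; and because $A$ need not be module-finite over $Z(A)$ --- the first Weyl algebra has centre $k$ yet infinite dimension over it --- there is no hope of reconstructing $A$ from $Z(A)$ and thereby descending the failure directly. What is missing is a relative rigidity theorem that controls the locally nilpotent Hasse--Schmidt derivations of $A[t_1,\ldots,t_d]$ along the fibres of $A\to \Spec Z(A)$, without any module-finiteness assumption; establishing such a statement, and matching its kernel on the centre with the cancellative geometry of $\Spec Z(A)$, is the crux that the methods of the present paper do not yet reach.
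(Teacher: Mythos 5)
The statement you are trying to prove is stated in the paper as Conjecture \ref{conic}; the paper offers no proof of it, and to the best of my knowledge it remains open. So there is no ``paper's own proof'' to compare against, and your proposal must be judged on whether it closes the problem on its own. It does not, and you say as much in your final paragraph. The one step you carry out completely --- restricting $\phi: A[t]\to B[t]$ to centres, using $Z(A[t])=Z(A)[t]$ for a prime ring, and invoking cancellativity of $Z(A)$ to get $Z(A)\cong Z(B)$ --- is correct but is very far from the conclusion: two non-isomorphic noncommutative domains can easily have isomorphic (even equal) centres, as your own Weyl-algebra example shows, so $\bigl(A[t]\cong B[t],\ Z(A)\cong Z(B)\bigr)$ does not by itself yield $A\cong B$. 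Everything after that is a description of what a proof \emph{would} need rather than a proof: the ``relative rigidity theorem'' controlling locally nilpotent Hasse--Schmidt derivations of $A[t_1,\ldots,t_d]$ over $\Spec Z(A)$ is neither stated precisely nor established, and the claim that a non-cancellation witness for $A$ forces the relevant Hasse--Schmidt derivation of $Z(A)$ to lift to $A$ is asserted without argument.

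That said, your diagnosis of where the difficulty lives is accurate and matches the architecture of Section \ref{S2}: Proposition \ref{that} and Corollary \ref{corn} show that rigidity of the centre (in the sense $\ML^{H'}_Z(A)=Z(A)$, or non-uniruledness of $\Frac(Z(A))$) forces strong cancellation, and the conjecture is precisely the assertion that the weaker hypothesis of cancellativity of $Z(A)$ suffices. Your observation that cancellativity of $Z(A)$ is strictly weaker than retractability of $Z(A)$ (the shear $x\mapsto x+t$ on $k[x][t]$) correctly identifies why the chain retractable $\Rightarrow$ detectable $\Rightarrow$ cancellative from \cite{LWZ} cannot be fed directly by the hypothesis, and the loophole you point to --- that automorphisms of $Z(A)[t]$ need not extend to $A[t]$ --- is the right place to look. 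But identifying the obstruction is not the same as overcoming it; as written, the proposal is a research plan, not a proof.
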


\section{The noncommutative slice theorem and proof of Theorem \ref{thm: main} ($a$)}\label{S3}
The slice theorem (see the book of Freudenburg \cite[Theorem 1.26]{Fr}) is a powerful tool when working on the Zariski cancellation problem. Roughly speaking, it says that if one has a locally nilpotent derivation $\delta$ of a commutative algebra $R$ of characteristic zero and a regular element $r\in R$ such that $\delta(r)=1$, then $R\cong S[t]$ where $S$ is the kernel of $\delta$.  We shall prove a noncommutative analogue of this result. We make use of the fact that for a $k$-algebra $A$ with a locally nilpotent derivation $\delta$, the map $\delta$ restricts to a locally nilpotent derivation of the centre of $A$.

This following lemma is an extension of the slice theorem for a (not necessarily commutative) prime affine $k$-algebra.  
\begin{lemma} (Noncommutative slice theorem)\label{gsliceth}
Let $k$ be a field and let $A$ be a $k$-algebra. Then the following statements hold.
\begin{enumerate}
 \item Suppose that the characteristic of $k$ is zero and $\delta\in \LND(A)$. If there exists $x\in Z(A)$ such that $\delta(x)=1$, and if $A_0$ is the kernel of $\delta$, then the sum $\sum_{i\ge 0} A_0x^i$ is direct and $A=A_0[x]$.
 \item Suppose that $\partial:=\{\partial_n\}_{n\geq 0}\in \LND^{I}(A)$. If there exists $x\in Z(A)$ such that $\partial_1(x)=1$ and $\partial_i(x)=0$ for $i\geq 2$, and if $A_0$ is the kernel of $\partial$, then the sum $\sum_{i\ge 0} A_0x^i$ is direct and $A=A_0[x]$.\end{enumerate}
\end{lemma}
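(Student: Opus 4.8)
The plan is to build a noncommutative analogue of the Dixmier projection attached to a slice, treating (a) and (b) uniformly. In case (a) I would set $\partial_n:=\delta^n/n!$, which by Remark~\ref{rem:auto}(3) is the canonical iterative Hasse--Schmidt derivation associated to $\delta$ and has kernel $A_0=\ker\delta$; in case (b) I work with the given iterative family $\partial=\{\partial_n\}$ directly. In both cases local nilpotence makes $\partial_n(a)=0$ for $n\gg 0$, so every sum below is finite, and the hypotheses say precisely that the generating homomorphism $G_t:=G_{\partial,t}\colon A\to A[t]$ of Remark~\ref{rem:auto} satisfies $G_t(x)=x+t$. Expanding $G_t(x^i)=(x+t)^i$ then yields the fundamental identity $\partial_m(x^i)=\binom{i}{m}x^{i-m}$, valid in every characteristic.

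First I would prove directness. Suppose $\sum_{i=0}^{d}a_i x^i=0$ with $a_i\in A_0$. Because $x$ is central and each $a_i$ lies in $\ker\partial$, the Hasse--Schmidt product rule collapses to $\partial_m(a_i x^i)=a_i\binom{i}{m}x^{i-m}$, so applying $\partial_d$ to the relation isolates the top coefficient and forces $a_d=0$; peeling off terms from the top downward shows every $a_i$ vanishes, whence $\sum_{i\ge 0}A_0 x^i$ is direct.

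Next I would show $A=\sum_{i\ge 0}A_0 x^i$. The central device is the projection $\pi(a):=\sum_{n\ge 0}(-x)^n\partial_n(a)$, which is the composite of $G_t$ with the evaluation $t\mapsto -x$; this evaluation is a ring homomorphism precisely because $x$ is central, so $\pi$ is one as well, fixing $A_0$ pointwise and sending $x$ to $0$. Two facts complete the argument. First, $\pi$ maps into $A_0$: in case (a) this follows from a telescoping computation giving $\delta(\pi(a))=0$, while in general it follows from the group law $G_s\circ G_t=G_{s+t}$ (which is the iterativity relation $\partial_i\circ\partial_j=\binom{i+j}{i}\partial_{i+j}$ repackaged), since the binomial theorem collapses $G_s(\pi(a))$ to $\sum_{l}\partial_l(a)(-x)^l=\pi(a)$, forcing $\partial_m(\pi(a))=0$ for $m\ge 1$. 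Second, every element is recovered as $a=\sum_{m\ge 0}\pi(\partial_m(a))\,x^m$; I would check this by substituting the iterativity relation into the right-hand side and reindexing by $l=m+n$, which produces $\sum_{l}x^l\partial_l(a)\sum_{n=0}^{l}(-1)^n\binom{l}{n}$, and the inner alternating sum equals $1$ when $l=0$ and $0$ otherwise, leaving exactly $a$. Since each coefficient $\pi(\partial_m(a))$ lies in $A_0$ by the first fact, this exhibits $a\in\sum_i A_0 x^i$, and combined with directness gives $A=A_0[x]$.

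The hard part will be the positive-characteristic bookkeeping in (b): one cannot divide by the factorials that make the characteristic-zero telescoping transparent, so one must carry the full iterative family and exploit $G_t$ together with the group law $G_s\circ G_t=G_{s+t}$ rather than iterating a single derivation. Everything reduces to the identity $\sum_{n=0}^{l}(-1)^n\binom{l}{n}=0$ for $l\ge 1$ and to the compatibility of $G_t$ with $t\mapsto -x$, both of which rest on the centrality of $x$ --- needed so that each $(-x)^n$ commutes past $\partial_n(a)$ and so that evaluation at $t=-x$ is a ring homomorphism. Without centrality neither $\pi$ nor the collapse of the double sum would be legitimate, which is exactly why the slice is required to lie in $Z(A)$.
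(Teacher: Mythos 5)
Your proof is correct, but it takes a genuinely different route from the paper's. The paper also reduces (a) to (b) via the canonical iterative Hasse--Schmidt derivation $\partial_n=\delta^n/n!$, and it gets directness the same way you do (applying a single top-degree operator $\partial_s$ to a putative relation and using $\partial_s(x^s)=1$, $\partial_s(x^i)=0$ for $i<s$; this is Remark~\ref{rem:direct}(a)). Where you diverge is the spanning step: the paper runs a downward induction on $\nu(a)=\max\{m:\partial_m(a)\neq 0\}$, noting that $\partial_i(\partial_m(a))=\binom{i+m}{m}\partial_{i+m}(a)=0$ places the top coefficient $c=\partial_m(a)$ in $A_0$ and then replacing $a$ by $a-cx^m$ to drop $\nu$; this uses iterativity only through that one identity and requires no closed formula. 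You instead build the noncommutative Dixmier map $\pi(a)=\sum_n(-x)^n\partial_n(a)=\mathrm{ev}_{t\mapsto -x}\circ G_{\partial,t}$, prove $\pi(A)\subseteq A_0$ from the group law $G_s\circ G_t=G_{s+t}$ (which is exactly iterativity), and exhibit the explicit expansion $a=\sum_m\pi(\partial_m(a))x^m$ via the collapse of $\sum_{n=0}^{l}(-1)^n\binom{l}{n}$. All of your manipulations (the product rule collapsing on $\ker\partial$, moving $(-x)^n$ past coefficients, evaluation at $-x$ being a ring map) are legitimate precisely because $x\in Z(A)$, as you note, and local nilpotence keeps every sum finite, so the argument is complete in all characteristics. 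What your version buys is an explicit $A_0$-valued projection and explicit coordinates of each element in the polynomial ring $A_0[x]$, which is the standard exponential/Dixmier proof of the commutative slice theorem transported to the iterative Hasse--Schmidt setting; what the paper's version buys is brevity and minimal reliance on identities beyond $\partial_i\circ\partial_j=\binom{i+j}{i}\partial_{i+j}$.
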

Before giving the proof of this result, we first make a basic remark.
\begin{remark} Let $k$ be a field, let $A$ be a prime $k$-algebra, let $\partial:=(\partial_n)\in \LND^I(A)$, and let $B={\rm ker}(\partial)$.   Then the following hold:
\begin{enumerate}
\item if there is $x\in A$ and $m\ge 1$ are such that $\partial_{m+i}(x)=0$ for $i\ge 1$ and $\partial_m(x)$ is a regular element of $A$, then the sum $$B+Bx+Bx^2+\cdots $$ is direct;
\item if $A$ is a field and $A$ is algebraic over $B$ then $A=B$;
\item if ${\rm GKdim}(A)<\infty$ and $A$ is an affine domain and $B\neq A$ then ${\rm GKdim}(B)\le {\rm GKdim}(A)-1$;
\item if ${\rm GKdim}(A)=1$ and $A$ is an affine domain and $B\neq A$ then $B$ is finite-dimensional.
\end{enumerate}
\label{rem:direct}

\end{remark}
\begin{proof} Suppose there exist $x\in A$ and $m\ge 1$ such that $\partial_{m+i}(x)=0$ for $i\ge 1$ and $\partial_m(x)$ is a regular element of $A$.
A straightforward induction shows that $\partial_{ms}(x^i)=0$ for $i<s$ and $\partial_{ms}(x^s) = \partial_m(x)^s$.  Suppose that there is a non-trivial relation $b_0+b_1x+\cdots + b_s x^s =0$ with $b_0,\ldots ,b_s\in B$ and $b_s$ nonzero.  Then applying $\partial_{ms}$ to this dependence gives $b_s\partial_m(x)^s=0$, which is impossible as $b_s\neq 0$ and $\partial_m(x)$ is regular.  This establishes (a).
Next, to prove (b), observe that if $A$ is a field, then $B$ is a subfield of $A$.  We have just shown that for $x\in A\setminus B$, the sum $B+Bx+\cdots $ is direct, and so if $A$ is algebraic over $B$ then we must have $A=B$.

We next prove (c). Suppose that $A$ is a domain of finite Gelfand-Kirillov dimension and that $B\neq A$ and that
$${\rm GKdim}(B)>{\rm GKdim}(A)-1.$$  Then there exists $\alpha>{\rm GKdim}(A)-1$ and a finite-dimensional $k$-vector subspace $W$ of $B$ that contains $1$ and such that
${\rm dim}(W^n) \ge n^{\alpha}$ for $n$ sufficiently large.  Pick $x\in A\setminus B$.  Then by (a), $B+Bx+Bx^2+\cdots $ is direct.  Now let $V=W+kx$.
Then $V^{2n} \supseteq W^n + W^n x +\cdots + W^n x^n$ and so ${\rm dim}(V^{2n})\ge (n+1) n^{\alpha} \ge n^{\alpha+1}$.  Thus ${\rm GKdim}(A)\ge \alpha+1$, a contradiction.  Thus we obtain (c).

We now prove (d).  Suppose that $A$ is an affine domain of Gelfand-Kirillov dimension one and that $B\neq A$. We claim that ${\rm dim}_k(B)<\infty$. Pick $z\in A\setminus B$. By part (a), the sum $B+Bz + Bz^2 +\cdots $ is direct.  Now suppose towards a contradiction that ${\rm dim}_k(B)$ is infinite and let $V$ be a finite-dimensional subspace of $A$ that contains $1$ and $z$ and which generates $A$ as a $k$-algebra.  Then since
$\bigcup_{i\ge 0} V^i \supseteq B$, we have $W_n:= V^n\cap B$ has the property that ${\rm dim}(W_n)\to\infty$ as $n\to\infty$.  Since $A$ has Gelfand-Kirillov dimension one, by a result of Bergman (see the proof of \cite[Theorem 2.5]{KL}) there is some positive constant $C$ such that 
${\rm dim}(V^n)\le Cn$ for $n$ sufficiently large.  On the other hand, for each $d\ge 1$ we have
$${\rm dim}(V^{n+d}) \ge {\rm dim}(W_d V^n) \ge {\rm dim}(W_d + W_d z + \cdots + W_d z^n) = {\rm dim}(W_d)(n+1).$$
Thus ${\rm dim}(W_d) \le C(n+d)/(n+1)$ for all $n$ sufficiently large and so ${\rm dim}(W_d) \le C$ for every $d\ge 1$, a contradiction.  Thus $B$ is finite-dimensional.
\end{proof}

\begin{proof}[Proof of Lemma \ref{gsliceth}]
It suffices to prove part (b) by Remark \ref{rem:auto}. We let 
$$A_0 = \{a \in A \mid \partial_n(a) = 0 ~ \textrm{for} ~n\geq 1\}.$$ 
We claim that $A=A_0[x]$. By Remark \ref{rem:direct}, $\sum A_0 x^i$ is direct. 
Thus $A_0$ and $x$ generate a polynomial ring and $A\supseteq A_0[x]$.  We next claim that $A\subseteq A_0[x]$.  To see this, suppose that there exists some $a\in A\setminus A_0[x]$.  Then there is some largest $m\ge 1$ such that $\partial_m(a)\neq 0$.  Among all $a\in A\setminus A_0[x]$, we choose one with this $m$ minimal.  Since $\partial_i(\partial_m(a))={i+m\choose m}\partial_{i+m}(a)=0$ for $i\ge 1$, $\partial_m(a)$ is in the kernel of $\partial$ and hence in $A_0$.  Let $c=\partial_m(a)\in A_0$ and consider $a'=a-cx^m$.  Observe that $\partial_j(a')=0$ for $j> m$ and $\partial_m(a')=0$ by construction.  Thus by minimality of $m$, $a'\in A_0[x]$ and hence so is $a$, a contradiction.  The result follows.
\end{proof}
\begin{proposition}\label{ggslice}
Let $k$ be a field of characteristic zero and let $A$ be a prime finitely generated $k$-algebra of finite Gelfand-Kirillov dimension, and suppose that $Z(A)$ is an affine domain of Gelfand-Kirillov dimension at most $1$. Then one of the following alternatives must hold:
\begin{enumerate}
\item ${\rm ML}_Z(A)=Z(A)$; or
\item there is a prime $k$-subalgebra $A_0$ of $A$ such that $A\cong A_0[t]$.
\end{enumerate}
\end{proposition}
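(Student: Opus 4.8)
The plan is to prove the stated dichotomy by assuming the first alternative fails and then producing the polynomial decomposition in the second. Suppose $\ML_Z(A)\neq Z(A)$. Since $\ML_Z(A)=\ML(A)\cap Z(A)=\bigcap_{\delta\in\LND(A)}\ker(\delta)\,\cap\,Z(A)$, this failure means there is some $\delta\in\LND(A)$ that does not annihilate $Z(A)$, i.e.\ the restriction $\delta|_{Z(A)}$ is a nonzero derivation. As recalled just before the statement, a locally nilpotent derivation of $A$ restricts to a locally nilpotent derivation of the centre, so $\delta|_{R}$ is a nonzero element of $\LND(R)$, where I write $R:=Z(A)$, an affine commutative domain of Gelfand-Kirillov dimension at most one.

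First I would pin down the kernel of $\delta$ on $R$. Since $R$ is an affine commutative domain, its Gelfand-Kirillov dimension equals its Krull dimension and is an integer; as $R$ carries a nonzero $k$-derivation it cannot be an algebraic (finite, hence separable) extension of $k$, so ${\rm GKdim}(R)=1$. Let $R_0=\ker(\delta|_R)$, a proper $k$-subalgebra of $R$ because $\delta|_R\neq 0$. Passing to the canonical iterative Hasse-Schmidt derivation $\partial_n=\delta^n/n!$ associated to $\delta|_R$ (Remark \ref{rem:auto}(c)), whose kernel $\bigcap_{i\geq1}\ker(\delta^i)$ coincides with $R_0$, I can invoke Remark \ref{rem:direct}(d) to conclude that $R_0$ is finite-dimensional over $k$. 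Being a subring of the domain $R$, the algebra $R_0$ is itself a domain, and a finite-dimensional domain over a field is a field; call it $\ell$.

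The crux of the argument is that this field structure upgrades a local slice to a genuine central slice. Because $\delta|_R$ is a nonzero locally nilpotent derivation, I can pick $r\in R$ and $n\geq 1$ with $\delta^n(r)\neq 0$ and $\delta^{n+1}(r)=0$; setting $s:=\delta^{n-1}(r)$ gives $b:=\delta(s)=\delta^n(r)\in\ker(\delta|_R)=\ell$ with $b\neq 0$. Since $\ell$ is a field, $b$ is a unit of $R$, so $x:=b^{-1}s\in R=Z(A)$ is central and, using $b^{-1}\in\ker(\delta)$, satisfies $\delta(x)=b^{-1}\delta(s)=1$. With this central slice in hand, the noncommutative slice theorem, Lemma \ref{gsliceth}(a), applies directly (we are in characteristic zero, $\delta\in\LND(A)$, and $x\in Z(A)$ with $\delta(x)=1$) and yields $A=A_0[x]$ with $A_0=\ker(\delta)$. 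Finally, $A_0$ is a $k$-subalgebra of $A$, and since $A\cong A_0[t]$ is prime while $A_0$ sits inside it as the degree-zero part, $A_0$ must be prime: a relation $IJ=0$ for nonzero two-sided ideals $I,J$ of $A_0$ would yield nonzero ideals $I[t],J[t]$ of $A_0[t]$ with $I[t]J[t]\subseteq (IJ)[t]=0$, contradicting primeness of $A$. This establishes alternative (b).

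I expect the main obstacle, and the essential structural input, to be the passage from a \emph{local} slice to a genuine slice lying in the centre. In general a locally nilpotent derivation only supplies a local slice $s$ with $\delta(s)\in\ker(\delta)\setminus\{0\}$, whereas Lemma \ref{gsliceth} requires a central element $x$ with $\delta(x)=1$. The hypothesis ${\rm GKdim}(Z(A))\leq 1$ is precisely what forces $\ker(\delta|_{Z(A)})$ to be finite-dimensional, hence a field, so that $\delta(s)$ becomes invertible and the local slice rescales to an honest central slice. The remaining points — checking that Remark \ref{rem:direct}(d) is genuinely applicable (in particular that the kernel of the canonical Hasse-Schmidt derivation is exactly $\ker(\delta|_R)$) and that primeness of $A_0$ descends from that of $A$ — are routine.
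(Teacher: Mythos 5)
Your proof is correct and follows essentially the same route as the paper's: assume $\ML_Z(A)\neq Z(A)$, restrict a witnessing $\delta\in\LND(A)$ to the centre, use ${\rm GKdim}(Z(A))\le 1$ together with Remark \ref{rem:direct} to see that $\ker(\delta)\cap Z(A)$ is a field, rescale the local slice to a central element $x$ with $\delta(x)=1$, and conclude via Lemma \ref{gsliceth}. Your write-up is in fact slightly more careful than the paper's at two points (ruling out ${\rm GKdim}(Z(A))=0$ and verifying primeness of $A_0$), but the argument is the same.
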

\begin{proof} If ${\rm ML}_Z(A)\neq Z(A)$, then there is some $\delta\in \LND(A)$ and some $z\in Z(A)$ such that $\delta(z)\neq 0$.  
We now pick the largest $j$ such that $\delta^j(z)\neq 0$ and we replace $z$ by $\delta^{j-1}(z)$. By construction, $\delta^i(z)=0$ for $i\ge 2$ and $c:=\delta(z)\neq 0$.  Then $c\in A_0\cap Z(A)$.  Now $A_0\cap Z(A)$ is a subalgebra of $Z(A)$ and since $Z(A)$ has Krull dimension one and $A_0\subsetneq Z(A)$, $A_0$ is finite-dimensional by Remark \ref{rem:direct} and thus is a field.  Thus $c$ is a unit and so if we replace $z$ by $c^{-1}z$ then we have $\delta(z)=1$ and we may invoke Lemma \ref{gsliceth} to get that $A\cong A_0[t]$.  Since $A$ is prime, $A_0$ is necessarily prime too. \end{proof}
In general, the proof of Proposition \ref{ggslice} shows that if $A$ is a affine prime $k$-algebra of finite Gelfand-Kirillov dimension over a field $k$ of characteristic zero, then either $\ML_Z(A)=Z(A)$ or there is a prime subalgebra $A_0$ of $A$ and some $c\in Z(A)\cap A_0$ such that $A[c^{-1}]\cong A_0[c^{-1}][t]$.  In the case, that $Z(A)$ is affine of Gelfand-Kirillov dimension one we are able to deduce that $c$ is invertible in the proof, which gives us part (b) in the dichotomy occurring in Proposition \ref{ggslice}.
\begin{proof}[Proof of Theorem \ref{thm: main} (a)] We recall that affine prime algebras of Gelfand Kirillov dimension one are noetherian and hence left Goldie \cite{SW}.
If $\ML(A)=A$ then $A$ is cancellative by Proposition \ref{M3}.  If on the other hand, $\ML(A)\neq A$, then there is a nonzero locally nilpotent derivation $\delta$ of $A$.  Let $A_0$ denote the kernel of $\delta$.  Then by Remark \ref{rem:direct} $A_0$ is finite-dimensional and since it is a domain, it is a division ring.

In particular, $Z(A)\not\subseteq A_0$, since $Z(A)$ has Gelfand-Kirillov dimension one \cite{SW}.  We let $E=A_0\cap Z(A)$.  Then $E$ is a commutative integral domain that is finite-dimensional over $k$ and hence $E$ is a field.  Since $\delta$ is not identically zero on $Z(A)$ and is locally nilpotent on $A$, there exists some $z\in Z(A)$ such that $z\not\in E$ and $c:=\delta(z)\in E\setminus \{0\}$.  As $E$ is a field and is contained in the kernel of $\delta$,  $x:=c^{-1}z\in Z(A)$ satisfies $\delta(x)=1$ and so by the noncommutative slice theorem, we see $A\cong A_0[x]$.  Then by the same analysis as above if $A[t]\cong B[t]$ then we necessarily have $\ML(B)\neq B$ and so $B\cong B_0[x]$ for some finite-dimensional division ring $B_0$.  Since $A_0$ is a finite-dimensional division algebra, it follows from \cite[Theorem 4.1]{LWZ} that $A_0$ is strongly cancellative and hence $A_0\cong B_0$ and hence $A$ is cancellative.  Thus we obtain the result in this case.
\end{proof}

We next prove a result, which has rather technical hypotheses, although we believe the result is important in understanding cancellation in positive characteristic.  Given an affine domain $A$ over a field $k$, we say that $k$ is \emph{inseparably closed} in $A$ if whenever $F$ is a $k$-subalgebra of $A$ that is a field, we have that $F$ is separable over $k$.  In particular, when $k$ has characteristic zero, this always holds. Throughout this proof we make use of the so-called Lucas identity, which says that if $p$ is prime and $0\le a,b<p$ and $A,B>0$ then
$$\binom{pA+a}{pB+b}\equiv \binom{A}{B}\binom{a}{b}~(\bmod \, p).$$
\begin{proposition} Let $k$ be a field and let $A$ be an affine domain of Gelfand-Kirillov dimension one.  Suppose that $k$ is inseparably closed in $A$ and that $\ML^I(A)\neq A$. Then $A$ is strongly cancellative.\label{pure}
\end{proposition}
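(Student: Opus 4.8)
The plan is to mirror the proof of Theorem \ref{thm: main}(a): I would produce a central \emph{slice} for a suitably chosen locally nilpotent iterative Hasse--Schmidt derivation, present $A$ as a polynomial extension $A\cong A_0[x]$ of a finite-dimensional division algebra $A_0$, and then deduce strong cancellation from the strong cancellation of finite-dimensional division algebras (\cite[Theorem 4.1]{LWZ}). The genuinely new content, compared with the characteristic-zero situation, is the construction of the slice in positive characteristic, and this is precisely where the hypotheses that $\partial$ be iterative and that $k$ be inseparably closed in $A$ will be used. Throughout I expect the Lucas identity to drive the combinatorial bookkeeping of the components $\partial_i$.

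First I would unwind the hypothesis $\ML^{I}(A)\neq A$: it yields a nontrivial $\partial=(\partial_n)\in\LND^{I}(A)$ with kernel $A_0=\Ker(\partial)$. Since $A$ is an affine domain of Gelfand--Kirillov dimension one and $A_0\subsetneq A$, Remark \ref{rem:direct}(d) shows $A_0$ is finite-dimensional, hence a division ring. By Small and Warfield \cite{SW}, $Z(A)$ has Gelfand--Kirillov dimension one, so $Z(A)\not\subseteq A_0$; because $G_{\partial}$ preserves the centre, $\partial$ restricts to a nontrivial element of $\LND^{I}(Z(A))$ whose ring of constants is the field $E:=A_0\cap Z(A)$, a finite extension of $k$ that is separable over $k$ by the inseparably-closed hypothesis.

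The heart of the matter is to manufacture a central level-one slice, i.e. an $x\in Z(A)$ with $\partial_1(x)=1$ and $\partial_i(x)=0$ for $i\geq 2$, after possibly replacing $\partial$; Lemma \ref{gsliceth}(b) then gives $A\cong A_0[x]$, with $A_0$ prime and hence a finite-dimensional division algebra. Let $q$ be the least index with $\partial_q\neq 0$ on $Z(A)$. Using the iterative relations $\partial_i\partial_j=\binom{i+j}{i}\partial_{i+j}$ together with Lucas, one shows $q=p^{s}$ is a power of the characteristic; and when $s\geq 1$ the intermediate components vanish, so that $D:=\partial_q$ is an ordinary $k$-derivation of $Z(A)$ satisfying $D(z^{p})=0$ for all $z$, while a further application of Lucas to $\prod_{j=2}^{p}\binom{jq}{q}$ (whose last factor $\binom{p^{s+1}}{p^{s}}\equiv 0$) gives $D^{p}=\partial_q^{p}=0$. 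This ``purely inseparable'' behaviour, i.e. a $\mathbb{G}_a$-action factoring through Frobenius with $\partial_1\equiv 0$, is exactly the obstruction to a level-one slice, and it is what the inseparably-closed hypothesis is designed to exclude. I would use that hypothesis to rule out the case $s\geq 1$ (for instance by passing to the normalization of $Z(A)$, to which the associated $\mathbb{G}_a$-action lifts and which is the affine line over the separable constant field $E$, so that the slice descends), reducing to $q=1$; then $\partial_1\neq 0$, and a normalized element whose maximal index has top coefficient in $E^{\times}$ furnishes, after descending that index to $1$ via the iterative relations, the desired slice. I expect \textbf{this} reduction to be the main obstacle, and the only place where the theorem genuinely differs from Theorem \ref{thm: main}(a); it is no accident that the Section \ref{S4} counterexamples live precisely in the regime where $k$ fails to be inseparably closed in $A$.

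Finally I would upgrade cancellation to strong cancellation. Combining the extension of $\partial$ with the divided-power derivations $\Delta_i^{n}$ of Remark \ref{rem:auto}(5) gives $\ML^{I}(A[t_1,\dots,t_d])\subseteq A_0$, which is finite-dimensional; thus $A$ is far from rigid and Proposition \ref{M3}(2) does not apply directly, forcing the structural route. So suppose $A[t_1,\dots,t_d]\cong B[t_1,\dots,t_d]$. Then $B$ is again an affine domain of Gelfand--Kirillov dimension one with $Z(B)$ infinite (by \cite{SW}), and $\ML^{I}(B[t_1,\dots,t_d])$ is finite-dimensional; an argument parallel to Proposition \ref{that}, using that $Z(B)$ is infinite, then forces $\ML^{I}(B)\neq B$, so the slice construction applies to $B$ as well and yields $B\cong B_0[y]$ with $B_0$ a finite-dimensional division algebra. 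Regrouping variables gives $A_0[x,t_1,\dots,t_d]\cong B_0[y,t_1,\dots,t_d]$, and the strong cancellation of the finite-dimensional division algebras $A_0$ and $B_0$ (\cite[Theorem 4.1]{LWZ}) yields $A_0\cong B_0$, whence $A\cong A_0[x]\cong B_0[y]\cong B$. The secondary technical point here is ensuring the derivation produced on $B$ is iterative enough to slice, which I would handle by establishing the $\LND^{I}$-analogue of the propagation statement in Proposition \ref{that}.
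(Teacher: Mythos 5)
There is a genuine gap at the step you yourself identify as the heart of the matter: the construction of a central level-one slice. Your claim that the inseparably-closed hypothesis rules out the case $q=p^{s}$ with $s\geq 1$ is not correct. That hypothesis only concerns subfields of $A$; it says nothing about whether $\partial_1$ vanishes on $Z(A)$. Already for $A=k[x]$ with $k$ perfect (so certainly inseparably closed in $A$), the assignment $x\mapsto x+t^{p}$ defines a nontrivial locally nilpotent \emph{iterative} Hasse--Schmidt derivation with $\partial_1\equiv 0$ and $\partial_p\neq 0$, i.e.\ exactly the ``$\mathbb{G}_a$-action factoring through Frobenius'' you hoped to exclude; likewise a nonzero derivation of $Z(A)$ killing $p$-th powers and satisfying $D^{p}=0$ is unremarkable (take $d/du$ on $\mathbb{F}_p[u]$), so nothing is contradicted. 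The fallback via the normalization of $Z(A)$ is not a proof either: even granting $Z(A)\cong E[u]$, you would still have to extend the slice from the centre to an iterative locally nilpotent Hasse--Schmidt derivation of all of $A$ (a finite module over its centre), and that extension problem is essentially the theorem itself.

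The paper's proof avoids a central slice entirely and uses the hypothesis in a place your proposal never reaches. It sets $\nu(a)=\sup\{m:\partial_m(a)\neq0\}$, picks $a\in A\setminus D$ with $\nu(a)=p^{r}$ minimal (with $r\geq 1$ allowed and $a$ not assumed central), normalizes $\partial_{p^r}(a)=1$, and shows directly that $A=D\oplus Da\oplus Da^{2}\oplus\cdots$ with $[D,a]\subseteq D$, so that $A\cong D[x;\delta]$ for an ordinary derivation $\delta$ of the finite-dimensional division ring $D$. Only now does inseparable closedness enter: $Z(D)$ is a finite subfield of $A$, hence separable over $k$, hence admits no nonzero $k$-derivation, so $\delta$ is $Z(D)$-linear and therefore inner by Skolem--Noether; translating $x'=x-c$ gives $A\cong D[x']$, and strong cancellativity of $D$ finishes the argument. (Compare the Section \ref{S4} examples, where the kernel is a purely inseparable field $K$ and $\delta$ is a non-inner derivation of $K$ --- precisely the situation the separability hypothesis excludes.) A secondary problem with your write-up: your final step needs $\ML^{I}(B)\neq B$, but the specialization argument of Proposition \ref{that} produces weakly locally nilpotent Hasse--Schmidt derivations that need not be iterative, so the $\LND^{I}$-analogue of that propagation statement is not available as stated.
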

\begin{proof}
When the characteristic of $k$ is zero, this follows immediately from Theorem \ref{thm: main} (a). Thus we may assume that $k$ has characteristic $p>0$.
Fix a non-trivial locally nilpotent iterative Hasse-Schmidt derivation $(\partial_n)$ of $A$ and let $D$ denote the kernel of $(\partial_n)$.  Then by Remark \ref{rem:direct}, $D$ is finite-dimensional and hence a finite-dimensional division ring over $k$. Given nonzero $a\in A$, we define $\nu(a) = \sup\{ m\colon \partial_m(a)\neq 0\}$.
Then we pick $a\in A\setminus D$ with $m:=\nu(a)$ minimal among elements of $A\setminus D$.  We claim that $\nu(a)=p^r$ for some $r\ge 0$.  To see this, suppose that this is not the case.
Then $m=\nu(a)=p^r s_0 + p^{r+1} s_1$ with $r\ge 0$, $1\le s_0<p$, and $s_0+ps_1>1$. If $s_1\ge 1$, then observe that $b=\partial_{p^r s_0}(a)$ has the property that
$$\partial_{p^{r+1}s_1}(b) = \partial_{p^{r+1}s_1}\circ \partial_{p^r s_0}(a) =  \binom{p^{r+1}s_1+p^rs_0}{p^r s_0}\partial_{m}(a) = \binom{ps_1+s_0}{s_0} \partial_{m}(a)\neq 0$$
and for $i>p^{r+1} s_1$, we have $\partial_i(b) \in k\partial_{m+i-p^{r+1}s}(a)=\{0\}$ and hence $\nu(b)=p^{r+1}s_1 < m$.  If, on the other hand, $s_1=0$, we have $m=p^r s_0$ with $2\le s_0<p$.  Then
if we let $b=\partial_{p^r}(a)$, then as before we have $\partial_i(b)=0$ for $i>p^r (s_0-1)$ and $\partial_{p^r(s_0-1)}(b)\neq 0$.  It follows that $m=\nu(a)$ is necessarily of the form $p^r$ for some $r\ge 0$.
Let $\alpha=\partial_m(a)$.  Then for $i\ge 1$, $$\partial_i(\alpha)=\partial_i\circ \partial_m(a) = \binom{m+i}{i}\partial_{m+i}(a)=0$$ and hence $\alpha\in D\setminus \{0\}$.  Then by replacing $a$ by $\alpha^{-1}a$, we may assume without loss of generality that $\alpha=1$.

We next claim that $p^r=\nu(a)$ divides $\nu(b)$ for every $b\in A$.  To see this, suppose this is not the case.  Then there exists some $b\in A$ such that $\nu(b) = p^r s+ q$ with $0<q<p^r$.  Consequently, there is some $i<r$ such that $q=p^i q'$ with $\gcd(q',p)=1$ and $q'<p^{r-i}$.  We let $b'=\partial_{p^r s}(b)$, and we have
$$\partial_{q}(b') = \partial_q \partial_{p^rs}(b) = \binom{p^r s + p^i q'}{p^i q'} \partial_m(b) = \binom{p^{r-i} s + q'}{q'}\partial_m(b)\neq 0.$$
Also for $i>q$ we have $\partial_i(b') \in k\partial_{m+i-q}(b)=\{0\}$ and so $0<\nu(b')=q<m$, which contradicts minimality of $m$.  

We now prove that $A\cong D[x]$.  To see this, observe that for $\beta\in D$, $\partial_i([\beta, a]) = [\beta, \partial_i(a)] =0$ for $i>m$ and since $\partial_m(a)=1$, $\partial_m([\beta,a])=0$ for $\beta\in D$ and thus $\nu([\beta,a])<m$ for all $\beta\in D$.  By minimality of $m$, $\nu([\beta,a])=0$ for $\beta\in D$ and so $[D,a]\subseteq D$. Hence the map $\delta: D\to D$ given by $\delta(\beta)=[\beta,a]$ is a $k$-linear derivation of $D$.  We claim that $A=D+Da+\cdots $.  Since $D\subseteq A$ and $a\in A$, it suffices to show that $A$ is contained in $$\sum Da^i.$$
So suppose that this containment does not hold.  Then there is some $$b\in A\setminus (D+Da+Da^2+\cdots ).$$  Among all such $b$, pick one with $\nu(b)$ minimal.  From the above we have $\nu(b)=p^r s=ms$ for some $s\ge 1$.  Let $\gamma=\partial_{p^rs}(b)\in D$ and consider $b':=b-\gamma a^s$.  By construction, $\nu(b')<\nu(b)$ and so by minimality of $\nu(b)$, $b'\in D+Da+\cdots$, which then gives that $b$ is too, a contradiction.  It follows that $A=D+Da+\cdots$ and since $A$ is infinite-dimensional and $D$ is finite-dimensional, this sum is direct; moreover, $[a,\beta]=\delta(\beta)$ for $\beta\in D$, and so $A\cong D[x;\delta]$ with $\delta$ a $k$-linear derivation and $k$ contained in $Z(D)$ and $[D:k]<\infty$.  Now by assumption $Z(D)$ is separable over $k$ and so $\delta$ vanishes on $Z(D)$ \cite[Prop. 3, V. p. 128]{Our}.  Thus $\delta$ is a $Z(D)$-linear derivation of $D$ and by a straightforward application of the Skolem-Noether theorem it is thus inner \cite[Theorem 3.22]{BF}.  Hence by making a change of variables of the form $x'=x-c$ with suitably chosen $c\in D$, we have $A\cong D[x']$.  But now $D$ is strongly cancellative \cite[Theorem 4.1]{LWZ} and thus $A$ is strongly cancellative. 
\end{proof}
\section{Examples}\label{S4}
In this brief section, we give a family of examples that establish Theorem \ref{thm: main} (b).  

\begin{proof}[Proof of Theorem \ref{thm: main} (b)]

Let $p$ be a prime, and let
$K=\mathbb{F}_p(x_1,\ldots ,x_{p^2-1})$.  We let $k=\mathbb{F}_p(x_1^p,\ldots, x_{p^2-1}^p)$ and we let $\delta$ be the $k$-linear derivation of $K$ given by
$\delta(x_i)=x_{i+1}$ for $i=1,\ldots ,p^2-1$, where we take $x_{p^2}=x_1$.  Since $k$ has characteristic $p>0$, we have $\delta^{p^i}$ is a $k$-linear derivation for every $i\ge 0$, and since $\delta^{p^2}(x_i)=\delta(x_i)=x_{i+1}$ for $i=1,\ldots ,p^2-1$, $\delta^{p^{j+2}}=\delta^{p^j}$ for every $j\ge 0$.  We let $\delta':=\delta^p$, which as we have just remarked is a $k$-linear derivation of $K$. 
We let $A=K[x;\delta]$ and we let $B=K[x';\delta']$.  Since ${\rm ad}_u^p = {\rm ad}_{u^p}$ for $u$ in a ring of characteristic $p$, we have $z:=x^{p^2}-x$ and $z':=(x')^{p^2}-x'$ are central by the above remarks.  We claim that $A$ and $B$ have Gelfand-Kirillov dimension one, $A\not\cong B$, and $A[t]\cong B[t']$.

Since $[K:k]<\infty$, $A$ and $B$ are finitely generated $k$-algebras of Gelfand-Kirillov dimension one \cite[Proposition 3.5]{KL}.
We construct an isomorphism $\Phi : A[t]\to B[t']$ as follows.  
We define $\Phi(\alpha)=\alpha$ for $\alpha\in K$, $\Phi(x)=(x')^p + t'$ and $\Phi(t)=(x')^{p^2}-x' + (t')^p$.  Then to show that $\Phi$ extends to a $k$-algebra homomorphism from $A[t]$ to $B[t']$, it suffices to show that $$\delta(\alpha)=\Phi([x,\alpha]) = [\Phi(x),\alpha]$$ for $\alpha\in K$ and that $\Phi(t)$ is central.  For $\alpha\in K$, $$[\Phi(t), \alpha]=[(x')^{p^2} - (x'),\alpha] = (\delta')^{p^2}(\alpha) - \delta'(\alpha)=0$$ and since $\Phi(t)$ also commutes with $x'$, $\Phi(t)$ is central.  To show that 
$$\delta(\alpha)=\Phi([x,\alpha]) = [\Phi(x),\alpha]$$ for $\alpha\in K$, observe that $\Phi([x,\alpha])= \Phi(\delta(\alpha))=\delta(\alpha)$ and $$[\Phi(x),\Phi(\alpha)] = [(x')^p+t',\alpha] = (\delta')^p(\alpha) =\delta^{p^2}(\alpha) = \delta(\alpha).$$  Thus $\Phi$ induces a homomorphism from $A[t]$ to $B[t']$. 
We claim that $\Phi$ is onto.
We have $\Phi(z) =  (z')^p + (t')^{p^2} - t'$ and $\Phi(t)=z'+(t')^p$.  
In particular, $$\Phi(z-t^p) = (z')^p + (t')^{p^2} -t' - (z')^p - (t')^{p^2} = -t'$$ and so
$\Phi(t+(z-t^p)^p) = z'$.  Thus $K$, $t'$ and $z'$ are in the image of $\Phi$.  Since $\Phi(x)=(x')^p +t'$ we also have $(x')^p\in {\rm Im}(\Phi)$. Finally, observe that
$z'=(x')^{p^2}-x'$ and since $z'$ and $(x')^p$ are in the image of $\Phi$, so is $$x' =(x')^{p^2}-z'=((x')^{p})^p-z'.$$  Thus $x', z'$ and $K$ are in the image of $\Phi$ and so $\Phi$ is onto.  Let $I$ denote the kernel of $\Phi$.  Then since $\Phi:A[t]\to B[t]$ is onto, we have $A[t]/I\cong B[t]$.  But $A[t]$ and $B[t]$ are both affine domains of Gelfand-Kirillov dimension two, and so $I$ is necessarily zero \cite[Proposition 3.15]{KL}.  Thus $\Phi$ is an isomorphism and so $A[t]\cong B[t]$. 

Thus it only remains to show that $A\not\cong B$ as $k$-algebras. 
To see this, suppose that $\Psi: A\to B$ is a $k$-algebra isomorphism.  Then since the units group of $A$ and $B$ are both $K^*$, $\Psi$ induces a $k$-algebra automorphism of $K$; furthermore, every $\alpha\in K$ satisfies $\alpha^p \in k$ and for $\beta\in k$ there is a unique $\alpha\in K$ such that $\alpha^p=\beta$.  Since $\Psi$ is the identity on $k$, $\Psi$ is the identity on $K$. Thus $\Psi(x) = p(x')$ for some $p(x')\in K[x';\delta']\setminus K$. Let $d\ge 1$ denote the degree of $p(x')$ as a polynomial in $x'$.  If $d>1$, it is straightforward to show that $\Psi$ cannot be onto, as every element in the image of $\Psi$ necessarily then has degree in $x'$ equal to a multiple of $d$.  Since $\Psi(x)\not\in K$, we see $\Psi(x)=\alpha x' + \beta$ with $\alpha\in K^*$ and $\beta\in K$.  Since $\Psi$ is an isomorphism, for $\zeta\in K$ we have 
$$\delta(\zeta)=\Psi(\delta(\zeta))=\Psi([x,\zeta]) =[\Psi(x),\Psi(\zeta)]= [\alpha x'+\beta, \zeta]=\alpha[x',\zeta]=\alpha\delta'(\zeta).$$
But by construction $\delta(x_1)=x_2$ and $\delta'(x_1)=x_{p+1}$ and so $\alpha = x_2/x_{p+1}$.
We also have $\delta(x_2)=x_3$ and $\delta'(x_2)=x_{p+2}$, and so $\alpha=x_3/x_{p+2}$, which gives $x_2 x_{p+2}= x_3 x_{p+1}$, where we take $x_{p+2}=x_1$ when $p=2$.  This is a contradiction.  Thus $A\not\cong B$.
\end{proof}

A key feature of these examples is that they have centres that are not inseparably closed.  It is natural to ask whether affine domain $A$ of GK dimension one are cancellative when one adds the assumption that the base field is inseparably closed. 
\begin{question}
Let $k$ be a field of positive characteristic and let $A$ be an affine domain $A$ of GK dimension one with the property that $k$ is inseparably closed in $A$. Is $A$ cancellative? 
\end{question}
If this question has a negative answer, a counterexample must be very constrained.  By work of Lezama, Wang, and Zhang \cite{LWZ}, if $A$ is a counterexample we have $Z(A)\cong k'[x]$ for some finite extension $k'$ of $k$, furthermore, $A$ is Azumaya and the Brauer group of $k'[x]$ cannot be trivial.  By Proposition \ref{pure}, we have $\ML^I(A)=A$ and yet we must also have $\ML^{H'}(A)\neq A$ by Proposition \ref{chrigid}.

\section{Skew Cancellativity}\label{S5}

We now consider the case of when an isomorphism of skew polynomial extensions $R[x;\sigma,\delta]\cong S[x;\sigma';\delta']$ implies that $R$ and $S$ are isomorphic.  We consider the case when $R$ and $S$ are finitely generated commutative integral domains of Krull dimension one over a field.  We observe that when $\sigma,\sigma'$ are the identity maps and $\delta,\delta'$ are zero, the question reduces to the classical cancellation problem for affine curves, answered by Abhyankar, Eakin, and Heinzer \cite{AEH}.  To prove Theorem \ref{thm: main2}, we must consider two types of extensions: skew extensions of automorphism type and skew extensions of derivation type.  We first look at the automorphism type case, in which the analysis is more straightforward.  
\begin{lemma} Let $k$ be a field, let $R$ be an affine commutative domain over $k$ of Krull dimension one, and let $\sigma$ be a $k$-algebra automorphism of $R$ that is not the identity. If $A$ is a commutative domain of Krull dimension one that is a homomorphic image of $R[x;\sigma]$ then either $A\cong R$ or $A\cong K[x]$ for some finite extension $K$ of $k$; moreover $R$ occurs as a homomorphic image of $R[x;\sigma]$.
\label{lem:sigma}
\end{lemma}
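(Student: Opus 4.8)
The plan is to analyze an arbitrary surjection $\pi\colon R[x;\sigma]\to A$ onto a commutative domain $A$ of Krull dimension one and to extract the structure of $A$ from the single defining commutator relation of the skew polynomial ring. The starting observation is that, since $A$ is commutative, $\pi$ must annihilate every commutator, and the only nontrivial commutators in $R[x;\sigma]$ arise from moving $x$ past an element of $R$. Explicitly, for $r\in R$ one computes $[x,r]=xr-rx=\sigma(r)x-rx=(\sigma(r)-r)x$, so applying $\pi$ yields the relation $\pi(\sigma(r)-r)\,\pi(x)=0$ in $A$ for every $r\in R$. Because $A$ is a domain, this forces a clean dichotomy: either $\pi(x)=0$, or $\pi(\sigma(r)-r)=0$ for all $r\in R$.

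In the first case, $\pi(x)=0$ collapses the image onto $\pi(R)$, so that $A\cong R/\mathfrak{p}$ with $\mathfrak{p}=\ker(\pi|_R)$ a prime of $R$; since $R$ is a one-dimensional domain, a nonzero prime would be maximal and make $A$ zero-dimensional, so $\mathfrak{p}=0$ and $A\cong R$. In the second case $\pi$ coequalizes $\sigma$ on $R$, hence factors through $R/I$, where $I$ is the ideal generated by $\{\sigma(r)-r:r\in R\}$. This is the point where the hypothesis $\sigma\neq\mathrm{id}$ enters: it guarantees $I\neq 0$, and in a one-dimensional affine domain the quotient by a nonzero ideal is zero-dimensional and hence finite-dimensional over $k$. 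Thus $\pi(R)$ is a finite-dimensional domain over $k$, i.e.\ a finite field extension $K$ of $k$, and $A=\pi(R)[\pi(x)]=K[\pi(x)]$. A final dimension count shows $\pi(x)$ must be transcendental over $K$ (otherwise $A$ would be finite over $K$ and so zero-dimensional), giving $A\cong K[x]$.

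The ``moreover'' clause is essentially free: the assignment $r\mapsto r$, $x\mapsto 0$ defines a surjection $R[x;\sigma]\to R$ whose kernel is the two-sided ideal $\bigoplus_{i\ge 1}Rx^i$ generated by $x$, exhibiting $R$ itself as a homomorphic image. I expect no serious obstacle in this argument; it is entirely driven by the commutator identity together with the domain property, and the remaining steps are standard facts from the dimension theory of affine curves. The one place requiring a little care is verifying that $I$ is genuinely nonzero and cuts out a finite locus --- equivalently, that the fixed locus of a non-identity automorphism of a one-dimensional affine domain is proper --- which is exactly what $\sigma\neq\mathrm{id}$ provides.
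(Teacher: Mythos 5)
Your proof is correct and follows essentially the same route as the paper's: both arguments hinge on the identity $(\sigma(r)-r)x\in\ker\pi$ together with the fact that the kernel is completely prime, and then use one-dimensionality of $R$ to pin down the two possible quotients. The only (cosmetic) difference is that you case-split on whether $\pi(x)=0$, whereas the paper splits on whether $\ker\pi\cap R$ is zero or maximal and then deduces the fate of $x$; your organization slightly streamlines the second case by avoiding the paper's intermediate steps showing $\sigma(I)=I$ and that $\sigma$ acts trivially on $R/I$.
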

\begin{proof}
We consider prime commutative homomorphic images of $T:=R[x;\sigma]$ of Krull dimension one.  Observe that if $P$ is a prime ideal of $T$ such that $T/P$ is commutative, then since $T/P$ is an integral domain and $R/(P\cap R)$ embeds in $T/P$, $R/(P\cap R)$ is also an integral domain.  Since $R$ is an integral domain of Krull dimension one, either $P\cap R=(0)$ or $P\cap R=I$, with $I$ a maximal ideal of $R$.  In the former case, observe that since $xr = \sigma(r) x \equiv x \sigma(r)~(\bmod\, P)$, we have $x(r-\sigma(r))\in P$.  Moreover, since $\sigma$ is not the identity and $P$ is completely prime, we necessarily have $x\in P$.  Thus $T/P$ is a homomorphic image of $R[x;\sigma]/(x)\cong R$.  Since $T/P$ and $R$ are both integral domains of Krull dimension one, we then have $T/P\cong R$ in this case.  
In the case where $P\cap R=I$, with $I$ a maximal ideal of $R$.  We claim that $I=I^{\sigma}$.  To see this, suppose that this is not the case.  Then since $I$ is maximal, $I+\sigma(I)=R$.  In particular, there are $a,b\in I$ such that $a+\sigma(b)=1$.  Then $ax, xb\in P$ and so $ax+xb\in P$.  But $ax+xb= (a+\sigma(b))x  = x$ and so $x\in P$.  Thus $T/P$ is a homomorphic image of $R/I$, which contradicts the assumption that $T/P$ has Krull dimension one.  Hence $I=\sigma(I)$. Then by the Nullstellensatz $K:=R/I$ is a finite extension of $k$ and $\sigma$ induces a $k$-algebra automorphism of $K$.  We next claim that $\sigma$ is the identity on $K$; if not, there is some $\lambda\in K$ such that $\lambda \not\equiv \sigma(\lambda) ~(\bmod\, P)$.  But since $[\lambda,x]=(\lambda-\sigma(\lambda))x\in P$ and since $P$ is completely prime, we again have $x\in P$, which gives $T/P\cong K$, a contradiction.  Thus $\sigma$ induces the identity map on $R/I=K$ and so $T/IT \cong K[x]$.  Since $P$ contains $IT$, we then see that $T/P$ is a homomorphic image of $K[x]$ and since $T/P$ has Krull dimension one, we have $T/P\cong K[x]$.  The result follows.
\end{proof}
\begin{proposition}  Let $k$ be a field and let $R$ be an affine commutative domain over $k$ of Krull dimension one.  If $R[x;\sigma]\cong S[x;\sigma']$ then $R\cong S$.
\label{sig}
\end{proposition}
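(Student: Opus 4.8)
The plan is to recover the isomorphism type of $R$ from $R[x;\sigma]$ by examining its commutative homomorphic images of Krull dimension one, which is exactly the data controlled by Lemma \ref{lem:sigma}. Fix an isomorphism $\phi\colon R[x;\sigma]\to S[x;\sigma']$. The first step is to detect whether the twisting automorphisms are trivial: the algebra $R[x;\sigma]$ is commutative if and only if $\sigma=\mathrm{id}$ (if $\sigma(r)\neq r$ then $[x,r]=(\sigma(r)-r)x\neq 0$ since $R$ is a domain), and likewise for $S[x;\sigma']$. As commutativity is preserved by $\phi$, either $\sigma=\sigma'=\mathrm{id}$, or both $\sigma$ and $\sigma'$ are nontrivial. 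In the first case $R[x]\cong S[x]$, and the cancellation theorem of Abhyankar--Eakin--Heinzer \cite{AEH} applied to the affine curves with coordinate rings $R$ and $S$ gives $R\cong S$; so it remains to treat the case $\sigma,\sigma'\neq\mathrm{id}$.

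Assume now $\sigma,\sigma'\neq\mathrm{id}$. Let $\mathcal{C}$ denote the set of isomorphism classes of commutative integral domains of Krull dimension one that arise as homomorphic images of $R[x;\sigma]$; since $\phi$ carries such quotients of $R[x;\sigma]$ to such quotients of $S[x;\sigma']$ (and its inverse does the reverse), $\mathcal{C}$ is the same whether computed from $R[x;\sigma]$ or from $S[x;\sigma']$. By Lemma \ref{lem:sigma} we have $[R]\in\mathcal{C}$ and $\mathcal{C}\subseteq\{[R]\}\cup\mathcal{P}$, where $\mathcal{P}$ is the set of classes of polynomial rings $K'[w]$ over finite field extensions $K'$ of $k$; symmetrically $[S]\in\mathcal{C}\subseteq\{[S]\}\cup\mathcal{P}$. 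If $R$ is not isomorphic to a polynomial ring over a field, then $[R]\notin\mathcal{P}$, so $[R]\in\{[S]\}\cup\mathcal{P}$ forces $[R]=[S]$ and we are done; the case where $S$ is not such a polynomial ring is symmetric.

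The remaining, and genuinely delicate, case is when both $R\cong L[u]$ and $S\cong K[v]$ are polynomial rings over finite field extensions $L,K$ of $k$; here Lemma \ref{lem:sigma} no longer distinguishes $[R]$ from the other classes in $\mathcal{C}$, since every member of $\mathcal{C}$ is now a polynomial ring over a field. I would resolve this by extracting the constant field as a finer invariant. A polynomial ring $F[w]$ over a field $F$ determines $F$ (for instance as its units together with $0$), so $\mathcal{C}$ determines the set $\mathcal{F}$ of isomorphism classes of finite field extensions $F$ of $k$ occurring as constant fields of members of $\mathcal{C}$. Now $[L]\in\mathcal{F}$ because $[R]=[L[u]]\in\mathcal{C}$ has constant field $L$; moreover every other member of $\mathcal{C}$ has the form $[K'[w]]$ with $K'=L[u]/I$ for a maximal ideal $I$ of $L[u]$, and the composite $L\hookrightarrow L[u]\twoheadrightarrow L[u]/I=K'$ is a $k$-algebra embedding, so $L$ embeds into $K'$. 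Thus $L$ embeds into every field in $\mathcal{F}$, and since $[L]\in\mathcal{F}$ it follows that $[L]$ is the unique element of $\mathcal{F}$ of minimal degree over $k$, hence an isomorphism invariant of $R[x;\sigma]$. Computing the same invariant from $S[x;\sigma']$ yields $[K]$, and as the two algebras are isomorphic we get $L\cong K$, whence $R\cong L[u]\cong K[v]\cong S$.

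The main obstacle is precisely this last degenerate case: once $R$ and $S$ are both polynomial rings over finite extensions of $k$, the quotient classification of Lemma \ref{lem:sigma} collapses and cannot by itself separate $R$ from the spurious quotients $K'[w]$, so one must pass to the finer invariant furnished by the constant field and argue that the ground field $L$ is pinned down as the unique minimal field embedding into all of these quotients.
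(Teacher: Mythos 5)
Your proof is correct and follows the same skeleton as the paper's: detect triviality of $\sigma,\sigma'$ via commutativity and invoke \cite{AEH} in that case, then use Lemma \ref{lem:sigma} to reduce to the situation where $R\cong L[u]$ and $S\cong K[v]$ with $L,K$ finite extensions of $k$. The only genuine divergence is how you finish this last case. The paper observes that the unit group of $L[u][x;\sigma]$ is $L^{*}$ (by degree considerations in a domain), so the isomorphism restricts to a $k$-algebra isomorphism $L=L^{*}\cup\{0\}\to K^{*}\cup\{0\}=K$; you instead recover $L$ as the unique constant field of minimal degree among the commutative Krull-dimension-one quotients. Your route does work, but be aware that it uses slightly more than the \emph{statement} of Lemma \ref{lem:sigma}: you need that every finite extension $K'$ appearing there is a residue field $R/I$ of $R$, hence contains a copy of $L$. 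That refinement is visible in the lemma's proof (the case $P\cap R=I$ maximal yields $T/P\cong (R/I)[x]$) but is not recorded in its statement, so you should either strengthen the lemma or cite its proof. The paper's units-group argument is shorter and sidesteps this point, at the cost of a small computation of the unit group of a skew polynomial ring over a domain.
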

\begin{proof}
If $\sigma$ is the identity then both $R[x;\sigma]$ and $S[x;\sigma']$ are commutative and so $\sigma'$ is also the identity and the result follows from \cite{AEH}. Hence we may assume that $\sigma$ and $\sigma'$ are not the identity maps on their respective domains.
By Lemma \ref{lem:sigma}, the set of isomorphism classes of prime commutative images of $R[x;\sigma]$ of Krull dimension one is contained in $\{K[x] \colon [K:k]<\infty\}\cup R$, with $R$ occurring on the list.  
Similarly, the set of isomorphism classes of prime commutative images of $S[x;\sigma']$ of Krull dimension one is contained in $\{K[x] \colon [K:k]<\infty\}\cup S$, with $S$ occurring on the list.  It follows that either $R\cong S$ or $R\cong K[x]$ for some finite extension $K$ of $k$.  Similarly, either $S\cong R$ or $S\cong K'[x]$ for some finite extension $K'$ of $k$.  Thus we may assume without loss of generality that $R= K[t]$ and $S\cong K'[t]$ with $K,K'$ finite extensions of $k$.  Then the $k$-algebra isomorphism $R[x;\sigma]\to S[x;\sigma']$ restricts to an isomorphism of the units groups.
Since the units groups of $R[x;\sigma]=K^*$ and the units group of $S[x;\sigma']$ is $(K')^*$, we see the isomorphism restricts to a $k$-algebra isomorphism between $K$ and $K'$.  Thus $K\cong K'$ and so $R\cong S$.
\end{proof}
We now prove a lemma, which is a straightforward extension of earlier work (see \cite[Lemma 21]{Ma1}, \cite[Lemma 3.5]{BZ1}).
 \begin{lemma} Let $k$ be a field of characteristic zero and let $A$ be a finitely generated Ore domain over $k$. If $\ML(A)= A$ then $\ML(A[x;\delta])=\ML(A)$.  
 \label{lem:delta1}
 \end{lemma}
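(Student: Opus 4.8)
The plan is to prove the two inclusions $\ML(A[x;\delta])\subseteq \ML(A)=A$ and $A\subseteq \ML(A[x;\delta])$ separately, writing $B:=A[x;\delta]$ and filtering $B$ by $x$-degree, so that $F_nB$ is the set of elements of $x$-degree at most $n$ (thus $F_{-1}B=0$ and $F_0B=A$). Note first that $B$ is a domain, since leading coefficients in $x$ multiply and $A$ is a domain.

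For the inclusion $\ML(B)\subseteq A$ I would exhibit one locally nilpotent derivation of $B$ with kernel exactly $A$. Let $\partial\colon B\to B$ be the unique $k$-linear derivation with $\partial|_A=0$ and $\partial(x)=1$; this is well defined because it respects the defining relation, as $\partial(xa-ax)=a-a=0=\partial(\delta(a))$ for $a\in A$. Writing elements of $B$ uniquely as $\sum_i a_ix^i$ we have $\partial(\sum_i a_ix^i)=\sum_i i a_ix^{i-1}$, which is locally nilpotent and, since $k$ has characteristic zero and $A$ is a domain, has kernel exactly $A$. Hence $\ML(B)\subseteq\ker\partial=A$; note this direction uses neither rigidity nor the Ore hypothesis.

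The substance is the reverse inclusion: every nonzero locally nilpotent derivation $D$ of $B$ satisfies $D(A)=0$. Here I would pass to the associated graded ring. Because $A$ is finitely generated, $\deg_x D(a)$ is bounded as $a$ runs over $A$, and $\deg_x D(x)$ is finite, so $D$ shifts the filtration by a finite amount $e$, and its leading part $\gr(D)$ is a \emph{nonzero} homogeneous locally nilpotent derivation of $\gr(B)=A[x]$ of degree $e$, where $A[x]$ is the ordinary polynomial ring in which $x$ has become central (the twist $[x,a]=\delta(a)$ drops to lower order in $\gr$). The crux is the graded rigidity claim that a nonzero homogeneous locally nilpotent derivation $\Delta$ of $A[x]$ must \emph{strictly lower} the $x$-degree, i.e. $\deg\Delta\le -1$. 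Granting this, $e\le -1$, so $D(F_nB)\subseteq F_{n-1}B$, whence $D(A)=D(F_0B)\subseteq F_{-1}B=0$; this yields $A\subseteq\ML(B)$ and, combined with the previous paragraph, finishes the proof. This passage to $\gr$ is exactly what turns the polynomial-ring statement of \cite[Lemma 3.5]{BZ1} into its skew analogue.

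To prove the graded claim, write $\Delta(a)=\theta(a)x^e$ for $a\in A$ and $\Delta(x)=cx^{e+1}$ with $c\in A$; the Leibniz rule (with $x$ central in $A[x]$) shows $\theta$ is a $k$-linear derivation of $A$. I expect the main obstacle to be ruling out $\Delta(x)\neq 0$ in nonnegative degree, and I would resolve it through factorial closedness of $\ker\Delta$: in a characteristic-zero domain the function $d_\Delta(f)=\max\{m:\Delta^m(f)\neq 0\}$ satisfies $d_\Delta(fg)=d_\Delta(f)+d_\Delta(g)$, so $\ker\Delta$ is inert. If $c\neq 0$ then, with $d=d_\Delta(x)\ge 1$, the element $\Delta^{d}(x)$ is a nonzero member of $\ker\Delta$ of the homogeneous form $Qx^{1+de}$ with $1+de\ge 1$; inertness forces the factor $x$ into $\ker\Delta$, i.e. $\Delta(x)=0$, a contradiction. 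Hence $c=0$, and then induction gives $\Delta^m(a)=\theta^m(a)x^{me}$, so local nilpotence of $\Delta$ makes $\theta$ a locally nilpotent derivation of $A$; since $\ML(A)=A$ forces $\theta=0$, we conclude $\Delta=0$, contradicting $\Delta\neq 0$ once $e\ge 0$. Therefore $\deg\Delta\le -1$, which is the required graded rigidity statement and the only place the hypothesis $\ML(A)=A$ enters.
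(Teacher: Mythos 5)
Your proof is correct, and while it shares its skeleton with the paper's argument, the second half is executed differently enough to be worth comparing. The first inclusion is identical: both you and the paper extend the zero derivation of $A$ to $A[x;\delta]$ by sending $x\mapsto 1$ and use characteristic zero to see the kernel is exactly $A$. For the reverse inclusion, both arguments are leading-term analyses in the $x$-degree, but the paper only extracts the top coefficient $\partial$ of $\mu|_A$, kills the case $m=0$ directly, and for $m>0$ defers to the three-case analysis of \cite[Lemma 3.5]{BZ1} (which compares $\deg\mu(x)$ with $m$); you instead pass to the full associated graded ring $\gr(A[x;\delta])\cong A[x]$ --- correctly noting that the twist $[x,a]=\delta(a)$ drops to lower order, so $x$ becomes central --- and prove a single graded rigidity statement there. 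Your handling of the case $\Delta(x)\neq 0$ via additivity of the degree function $d_\Delta$ is a clean substitute for the case split, and it is legitimate in this noncommutative setting: the binomial Leibniz formula and the fact that $A[x]$ is a domain are all that the additivity proof uses. The case $\Delta(x)=0$ reduces to $\ML(A)=A$ exactly as in the paper. The one step you assert rather than prove --- that the leading part $\gr(D)$ of a locally nilpotent derivation relative to the $x$-degree filtration is again locally nilpotent --- is standard and true here (inductively, $\gr(D)^m(\bar f)\neq 0$ forces $D^m(f)\neq 0$), but a sentence of justification would make the write-up fully self-contained. Your route buys a complete, citation-free argument (the paper's proof is essentially a pointer to \cite{BZ1}) and, as you note, never uses the Ore hypothesis; the paper's version is shorter on the page precisely because it outsources the degree bookkeeping.
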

\begin{proof} Let $\mu$ be a locally nilpotent derivation of $A$.  Then $\mu$ extends to a locally nilpotent derivation of $A[x;\delta]$ by declaring that $\mu(x)=1$.  Then the kernel of this extension of $\mu$ is equal to ${\rm ker}(\mu|_A)$ and hence 
 $\ML(A[x;\delta])\subseteq \ML(A)$.  Now we show that the reverse containment holds. Let $\mu$ be a locally nilpotent derivation of $A[x;\delta]$ and suppose that $\mu$ is not identically zero on $\ML(A)$. Since $A$ is finitely generated there is some largest $m\ge 0$ such that for $r\in A$ we have
 $$\mu(r) = \partial(r) x^m + {\rm lower ~degree~terms},$$ with $\partial$ a derivation of $A$ that is not identically zero on $\ML(A)$. If $m=0$ then $\partial$ is a locally nilpotent derivation of $A$ and hence vanishes on $\ML(A)$, a contradiction.  Thus we may assume that $m>0$.  We now argue as in the three cases given in \cite[Lemma 3.5]{BZ1}.
 \end{proof}
 The following result is due to Crachiola and Makar-Limanov \cite[Lemma 2.3]{CM}.
 \begin{remark}
 Let $k$ be a field of characteristic zero and let $R$ be an affine commutative domain of Krull dimension one. Then either $\ML(R)=R$ or $R\cong k'[t]$ for some finite field extension $k'$ of $k$.  
 \end{remark}
 \begin{proof} Suppose that $\ML(R)\neq R$.  Then there is a locally nilpotent derivation $\delta$ of $R$ that is not identically zero on $R$.  In particular, the kernel of $\delta$ is a subalgebra $R_0$ of $R$.  By Remark \ref{rem:direct}, $R_0$ is finite-dimensional and hence a finite extension $k'$ of $k$.  Then pick $x\in R$ such that $\delta(x)\neq 0$ and $\delta^2(x)=0$.  Then $\delta(x)\in (k')^*$ and so we may rescale and assume that $\delta(x)=1$.  Then by Lemma \ref{gsliceth}, $R\cong k'[x]$, as required.  
 \end{proof}
 \begin{corollary} Let $k$ be a field of characteristic zero, let $R$ be a finitely generated $k$-algebra that is a commutative domain of Krull dimension one, and let $\delta$ be a $k$-linear derivation of $R$.  Then either $R\cong k'[t]$ for some finite extension $k'$ of $k$ or $\ML(R[x;\delta])=R$.
 \end{corollary}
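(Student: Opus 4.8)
The plan is to read this off as an immediate consequence of the two results that directly precede it: the Crachiola--Makar-Limanov remark \cite[Lemma 2.3]{CM} and Lemma \ref{lem:delta1}. The statement has the shape of a dichotomy, and that dichotomy is already supplied by the remark, so the whole task is to route each of its two branches to the correct alternative in the corollary.

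First I would apply the preceding remark to $R$: since $k$ has characteristic zero and $R$ is an affine commutative domain of Krull dimension one, we get that either $R\cong k'[t]$ for some finite extension $k'$ of $k$, or $\ML(R)=R$. In the first branch there is nothing to do, as this is exactly the first alternative in the corollary's conclusion. So the only work is in the branch $\ML(R)=R$, where I must establish $\ML(R[x;\delta])=R$.

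In that branch I would invoke Lemma \ref{lem:delta1} with $A=R$. The one hypothesis to verify is that $R$ is a finitely generated Ore domain over $k$: it is finitely generated by assumption, and a commutative domain automatically satisfies the Ore condition, so $R$ is an Ore domain. Since we are in the branch where $\ML(R)=R$, Lemma \ref{lem:delta1} yields $\ML(R[x;\delta])=\ML(R)$, and combining with $\ML(R)=R$ gives $\ML(R[x;\delta])=R$, which is the second alternative.

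There is no genuine obstacle here; the proof is a two-line combination, and all of the substantive content lives in the two cited precursors (the classification of Krull-dimension-one commutative domains by their Makar-Limanov invariant, and the stability of $\ML$ under passing to a skew extension of derivation type when $\ML(A)=A$). The only point worth flagging explicitly is the trivial verification that a commutative domain is Ore, which is what licenses the application of Lemma \ref{lem:delta1}.
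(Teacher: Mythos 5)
Your proof is correct and is precisely the intended derivation: the paper states this corollary without proof immediately after the Crachiola--Makar-Limanov remark and Lemma \ref{lem:delta1}, and the expected argument is exactly your two-step combination (the dichotomy from the remark, plus Lemma \ref{lem:delta1} applied in the branch $\ML(R)=R$, with the trivial check that a commutative domain is Ore).
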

 \begin{proposition} Let $k$ be a field of characteristic zero and let $R$ and $S$ be affine commutative domains over $k$ of Krull dimension one.  If $\delta$ and $\delta'$ are respectively $k$-linear derivations of $R$ and $S$ and $R[x;\delta]\cong S[x;\delta']$, then $R\cong S$.
 \label{del}
 \end{proposition}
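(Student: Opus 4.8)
The plan is to reduce the problem to the Makar--Limanov invariant, exactly along the lines underlying the preceding corollary, and to handle the exceptional polynomial case separately. Write $T=R[x;\delta]$ and $T'=S[x;\delta']$ and fix a $k$-algebra isomorphism $\phi\colon T\to T'$. By the Crachiola--Makar-Limanov remark together with the corollary above, each of $R,S$ falls into one of two classes: either $\ML(R[x;\delta])=R$ (which holds when $\ML(R)=R$, via Lemma~\ref{lem:delta1}), or $R\cong k'[t]$ for a finite extension $k'$ of $k$; and symmetrically for $S$. If neither $R$ nor $S$ is a polynomial ring, then $\ML(T)=R$ and $\ML(T')=S$, and since any isomorphism transports locally nilpotent derivations to locally nilpotent derivations we have $\phi(R)=\phi(\ML(T))=\ML(T')=S$, whence $R\cong S$. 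Thus the whole difficulty is concentrated in the case where, say, $R\cong k'[t]$.

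Assume then $R=k'[t]$. As $\ch(k)=0$, the extension $k'/k$ is separable, so every $k$-linear derivation of $k'$ vanishes (differentiate a separable minimal polynomial), and hence $\delta$ is $k'$-linear; writing $f:=\delta(t)\in k'[t]$ we have $\delta=f\,\partial_t$. I would split according to $\deg f$. If $f=0$ then $T=k'[t,x]$ is commutative, forcing $T'$ and hence $\delta'$ to be zero, so $S[x]\cong k'[t][x]$ as commutative rings and the Abhyankar--Eakin--Heinzer theorem \cite{AEH} gives $R\cong S$. If $f$ is a nonzero constant, then after rescaling $x$ the algebra $T$ is the first Weyl algebra $A_1(k')$, whose Makar--Limanov invariant is the field $k'$ (the inner derivations $\operatorname{ad}_t,\operatorname{ad}_x$ are locally nilpotent with $\ker(\operatorname{ad}_t)\cap\ker(\operatorname{ad}_x)=k'[t]\cap k'[x]=k'$). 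Then $\ML(T')\cong k'$ is a field, so $\ML(T')\neq S$, and the corollary forces $S\cong k''[t]$. Since $T'$ is noncommutative, $\delta'\neq0$, and a short computation shows $Z(R[x;\delta])=\ker(\delta|_R)$ whenever $\delta\neq0$; thus $Z(T')=k''$ and $Z(T)=k'$, and $\phi$ restricts to a $k$-algebra isomorphism $k'\cong k''$, giving $R\cong S$.

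The remaining, genuinely noncommutative case is $\deg f\geq1$, and here the crux is the following computation, which I would isolate as a lemma: \emph{for $R=k'[t]$ and $\delta=f\,\partial_t$ with $\deg f\geq1$ one has $\ML(R[x;\delta])=k'[t]$.} Granting this, the argument closes as before. Since $\phi(\ML(T))=\ML(T')$, we get $\ML(T')\cong k'[t]$, which has Krull dimension one and so is not a field; hence by the corollary either $\ML(T')=S$ (when $S$ is not a polynomial ring), or $S\cong k''[t]$, in which case the lemma (applied to $S$, whose $\delta'$ must then have degree at least one, the constant case being excluded by non-fieldness) gives $\ML(T')=k''[t]=S$. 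In either case $S=\ML(T')\cong k'[t]=R$.

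It remains to prove the lemma, and this is the main obstacle. The upper bound $\ML(T)\subseteq k'[t]$ is easy: $\operatorname{ad}_t$ lowers the $x$-degree and satisfies $\operatorname{ad}_t(x)=-f(t)\in\ker(\operatorname{ad}_t)$, so it is locally nilpotent, and its kernel is the centralizer of $t$, which is exactly $k'[t]$ because $[t,x^n]$ has $x$-degree $n-1$ with leading coefficient a nonzero multiple of $f$. The reverse inclusion --- that \emph{every} locally nilpotent derivation of $T$ kills $t$ --- is the hard part. I expect to obtain it by Makar-Limanov's filtration method: filter $T$ by the $x$-degree so that the associated graded ring is the commutative polynomial ring $k'[t,\xi]$, pass from a given locally nilpotent derivation $D$ of $T$ to its homogeneous leading part (a locally nilpotent derivation of $k'[t,\xi]$), and then invoke Rentschler's classification of locally nilpotent derivations of the affine plane to pin down the possible kernels. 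The conceptual point to extract is that the failure of $\delta=f\,\partial_t$ to be locally nilpotent on $R$ --- precisely what distinguishes this case from the Weyl-algebra case --- is what prevents any locally nilpotent derivation of $T$ from moving $t$. The delicate step, where I anticipate the real work lies, is controlling the passage from the leading part back to $D$ itself, in particular ruling out the cancellation of leading terms that would be needed to allow $D(t)\neq0$.
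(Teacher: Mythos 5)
Your overall strategy---reducing to the Makar--Limanov invariant via Lemma~\ref{lem:delta1} and the Crachiola--Makar-Limanov dichotomy, then isolating the case $R\cong k'[t]$---follows the paper's proof of Proposition~\ref{del} up to that point, and your sub-cases $f=0$ and $f$ a nonzero constant are handled correctly. The problem is the case $\deg f\ge 1$, which you yourself identify as the crux: there your argument rests entirely on the claim that $\ML\bigl(k'[t][x;f\partial_t]\bigr)=k'[t]$. You prove only the easy inclusion $\ML(T)\subseteq \ker(\operatorname{ad}_t)=k'[t]$ and then \emph{sketch} a filtration/leading-term strategy (associated graded ring $k'[t,\xi]$, Rentschler's theorem) for the reverse inclusion, explicitly flagging the passage from the homogeneous leading part of a locally nilpotent derivation back to the derivation itself as unresolved. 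That step is exactly where the real difficulty of such computations lives (compare \cite[Lemma 3.5]{BZ1} and \cite{Ma1}), so as written the proof has a genuine gap: the key lemma of your hard case is asserted, not proved.

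The gap is also avoidable, and the paper's proof shows how: it never computes $\ML(R[x;\delta])$ when $R$ is a polynomial ring. After the $\ML$ comparison reduces matters to $R\cong k'[t]$ and $S\cong k''[t]$ with $k',k''$ finite extensions of $k$, it simply observes that the unit group of $k'[t][x;\delta]$ is $(k')^*$ (degrees add in a skew polynomial extension of a domain), so any isomorphism $R[x;\delta]\to S[x;\delta']$ restricts to a $k$-algebra isomorphism $k'\cong k''$, whence $R=k'[t]\cong k''[t]=S$. This one observation disposes of all of your sub-cases on $\deg f$ at once, including the one you could not close. Either substitute this units argument for your $\deg f\ge 1$ analysis, or actually carry out the leading-term argument needed to prove your lemma; as it stands the proposal is incomplete.
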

 \begin{proof}
 By Lemma \ref{lem:delta1}, we have $\ML(R)=\ML(R[x;\delta]) \cong \ML(S[x;\delta']) = \ML(S)$.  Now if neither $R$ nor $S$ is isomorphic to $k'[x]$, with $k'$ some finite extension of $k$, then $R=\ML(R)\cong \ML(S)=S$ and we get the result.  If $R$ is isomorphic to $k'[x]$ for some finite extension of $k$ and $S$ is not isomorphic to an algebra of this type, then $k'=\ML(R)\cong \ML(S)=S$, which is impossible.  Thus we may assume that $R\cong k'[t]$ and $S\cong k''[t]$ where $k'$ and $k''$ are finite extensions of $k$.  But now the units group of $R[x;\delta]$ is $(k')^*$ and the units group of $S[x;\delta']$ is $(k'')^*$ and so the isomorphism from $R[x;\delta]\to S[x;\delta']$ restricts to an isomorphism between $k'$ and $k''$ and so $R\cong S$ in this case.
 \end{proof}
 We do not know whether Proposition \ref{del} is true when the base field $k$ has positive characteristic.  
We compare the examples from Theorem \ref{thm: main} (b) with the positive characteristic case of Proposition \ref{del}.  In positive characteristic, there exists a field $k$ and a finite extension $K$ of $k$ and $k$-linear derivations $\delta,\delta'$ of $K$ such that $K[t;\delta][x]\cong K[t;\delta'][x]$ but $K[t;\delta]\not\cong K[t;\delta']$.  But we can extend $\delta$ and $\delta'$ to $K[x]$ by declaring that $\delta(x)=\delta'(x)=0$ and we have
$$K[t;\delta][x]\cong K[x][t;\delta]\cong K[x][t;\delta']\cong K[t;\delta'][x].$$
So the algebra $K[t;\delta][x]\cong K[x][t;\delta]$ is cancellative with respect to the variable $t$ but not with respect to the variable $x$.  Thus these examples do not give rise to counterexamples to the positive characteristic of Proposition \ref{del}.
\begin{proof}[Proof of Theorem \ref{thm: main2}] This follows immediately from Propositions \ref{sig} and \ref{del}.
 \end{proof}
We do not know whether cancellation holds for skew polynomial extensions of mixed type with coefficient rings being domains of Krull dimension one.  We finish by posing the following unresolved question, which---if the answer were affirmative---would unify the two cases in Theorem \ref{thm: main2} and would also extend Proposition \ref{del} to base fields of positive characteristic.
\begin{question}
Let $k$ be a field, let $R$ be an affine commutative domain over $k$ of Krull dimension one, and let $\sigma$ and $\delta$ be respectively a $k$-algebra automorphism and a $k$-linear $\sigma$-derivation of $R$.  Is $R[x;\sigma,\delta]$ cancellative?
\end{question}


\end{document}